\newtheorem{definition}{Definition}
\newtheorem{proposition}{Proposition}
\newtheorem{theorem}{Theorem}
\newtheorem{remark}{Remark}
\newcommand{\e }{ \varepsilon}
\newcommand{\om }{ \omega}
\newcommand{\Om }{ \Omega}
\newcommand{\Oms }{ \Omega^\star}
\newcommand{\dn }{ \partial_n}
\newcommand{\OO}{\mathcal{O}}
\newcommand{\B}{\mathcal{B}}
\newcommand{\D }{\mathcal{D}}
\newcommand{\be }{\beta_\e}
\newcommand{\jj }{J}
\newcommand{\wsa}{\widetilde\Sigma_2(a)}
\newcommand{\wsb}{\widetilde\Sigma_4(a)}
\newcommand{\wsc}{\widetilde\Sigma_3(a)}
\newcommand{\ub }{u_{2,\e}}
\newcommand{\p }{\xi}
\title{On a Bernoulli problem with geometric constraints}
\author{Antoine Laurain\\ Karl-Franzens-University of Graz,\\ Department of Mathematics and Scientific Computing, \\
Heinrichstrasse 36, A-8010 Graz, Austria \\
E-mail address: Antoine.Laurain@uni-graz.at
 \and Yannick Privat\\ 
IRMAR, ENS Cachan Bretagne, Univ. Rennes 1, CNRS, UEB,\\
  av. Robert Schuman, F-35170 Bruz, France, \\
E-mail address: Yannick.Privat@bretagne.ens-cachan.fr}
\begin{document}

\maketitle

\vspace*{0.4cm} {\bf Abstract.} 
A Bernoulli free boundary problem with geometrical constraints is studied. The domain $\Om$ is constrained to lie in the half space determined by $x_1\geq 0$ and its boundary  to contain a segment of the hyperplane $\{x_1=0\}$ where non-homogeneous Dirichlet conditions are imposed. We are then looking for the solution of a partial differential equation satisfying a Dirichlet and a Neumann boundary condition simultaneously on the free boundary. The existence and uniqueness of a solution have already been addressed and this paper is devoted first to the study of geometric and asymptotic properties of the solution and then to the numerical treatment of the problem using a shape optimization formulation. The major difficulty and originality of this paper lies in the treatment of the geometric constraints.
\vspace*{0.3cm}

{\bf Keywords:} \parbox[t]{11cm}{free boundary problem, Bernoulli condition, shape optimization}\\

{\bf AMS classification:} 49J10, 35J25, 35N05, 65P05\\

\section{Introduction}
Let $(0,x_1,...,x_N)$ be a system of Cartesian coordinates in $\mathds{R}^N$ with $N\geq 2$. We set  $\mathds{R}^N_+=\{\mathds{R}^N: x_1>0\}$. Let $K$ be a smooth, bounded and convex set such that $K$ is included in the hyperplane $\{x_1=0\}$. We define a set of {\it admissible shapes} $\OO$ as
$$\OO=\{\Om\mbox{ open and convex}, K\subset\partial\Om\}. $$
We are looking for a domain $\Om\in\OO$, and for a function $u:\Om\to\mathds{R}$ such that the following over-determined system
\begin{align}
 \label{1.1}-\Delta u & =  0 \quad  \mbox{in} \  \Om , \\
 \label{1.2}u & =  1 \quad  \mbox{on}\  K ,\\
 \label{1.3}u & =  0 \quad  \mbox{on}\  \partial\Om\setminus K ,\\
 \label{1.4}|\nabla u| & =1 \quad  \mbox{on} \  \Gamma:=(\partial\Om\setminus K)\cap\mathds{R}^N_+ 
\end{align}
has a solution; see Figure \ref{omfig} for a sketch of the geometry. Problem \eqref{1.1}-\eqref{1.4} is a {\it free boundary problem} in the sense that it admits a solution only for particular geometries of the domain $\Om$. The set $\Gamma$ is the so-called {\it free boundary} we are looking for. Therefore, the problem is formulated as
\begin{equation}
(\mathcal{F}):\mbox{ Find }\Om\in\OO\mbox{ such that problem }\eqref{1.1}-\eqref{1.4}\mbox{ has a solution.}
\end{equation}
This problem arises from various areas, for instance shape optimization, fluid dynamics, electrochemistry and electromagnetics, as explained in \cite{MR605427,MR1260974,MR761737,MR1776102}. For applications in $N$ diffusion, we refer to \cite{MR0140343} and for the deformation plasticity see \cite{MR760209}.

\begin{figure}
\begin{center}
\psfrag{e}{$\Delta u=0$}
\psfrag{o}{$\Om$}
\psfrag{d1}{$u=1$}
\psfrag{d0}{$u=0$}
\psfrag{n}{$|\nabla u|  =1 $}
\psfrag{l}{$L$}
\psfrag{k}{$K$}
\psfrag{g}{$\Gamma$}
\psfrag{x}{$x_1$}
\psfrag{y}{$x_2$}
\includegraphics[width=6cm]{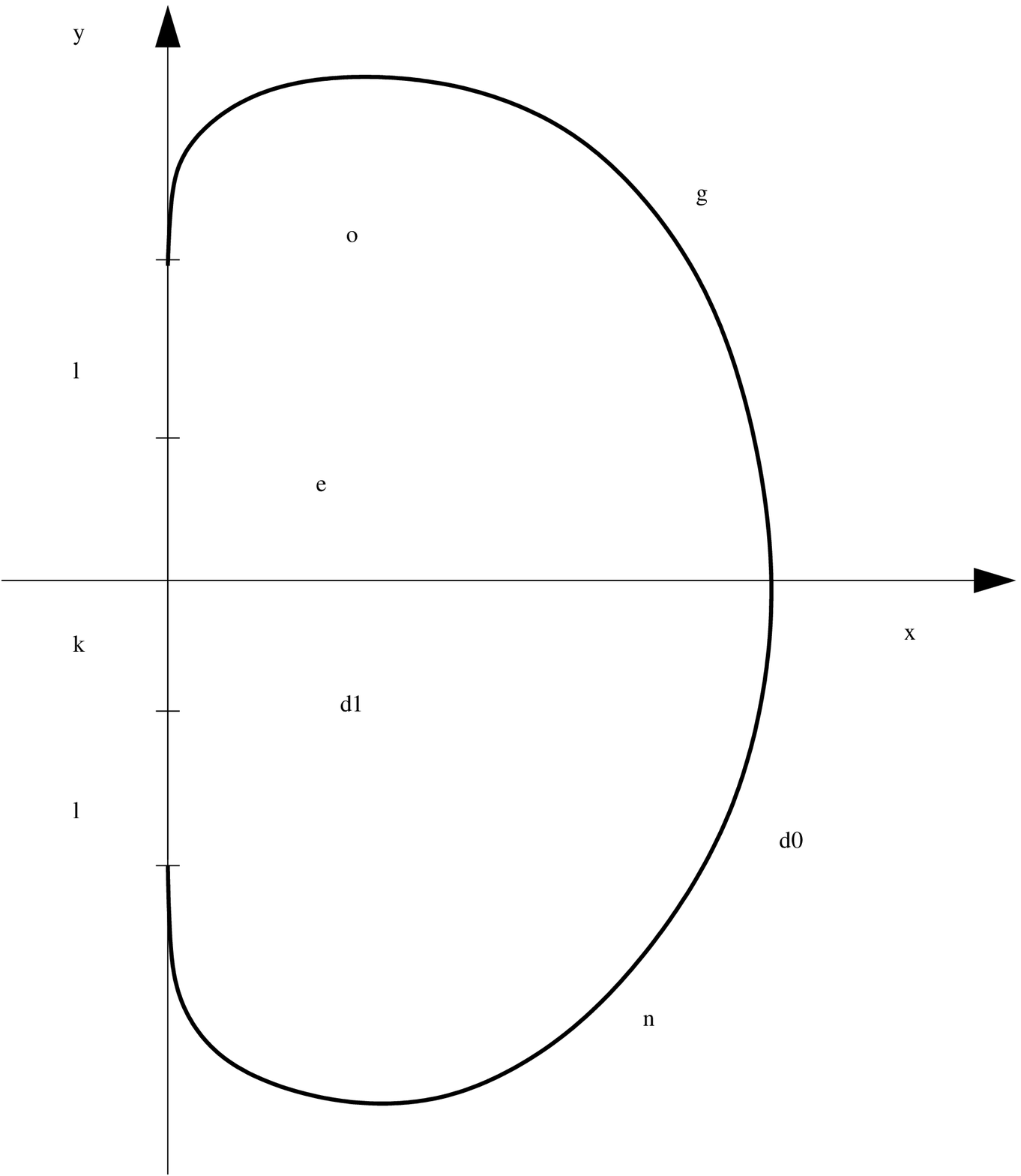}
\end{center}
\caption{The domain $\Om$ in dimension two.}\label{omfig}
\end{figure}

\par For our purposes it is convenient to introduce the set $L:=(\partial\Om\setminus K )\cap\{x_1=0\}$. Problems of the type $(\mathcal{F})$ may or may not, in general, have solutions, but it was already proved in \cite{MR2338115} that there exists a unique solution to $(\mathcal{F})$ in the class $\OO$. Further we will denote $\Oms$ this solution. In addition,  it is shown in \cite{MR2338115} that $\partial\Oms$ is $C^{2+\alpha}$ for any $0<\alpha<1$, that the free boundary $\partial\Oms\setminus K$ meets the fixed boundary $K$ tangentially and that $L^\star=(\partial\Oms\setminus K )\cap\{x_1=0\}$ is not empty.\\

In the literature, much attention has been devoted to the Bernoulli problem in the geometric configuration where the boundary $\partial\Om$ is composed of two connected components and such that $\Om$ is connected but not simply connected, (for instance for a ring-shaped $\Om$), or for a finite union of such domains; we refer to \cite{Beurling,FluRu97} for a review of theoretical results and to \cite{MR2149216,MR2511644,MR2013364,MR2183542,MR2306261} for a description of several numerical methods for these problems. In this configuration, one distinguishes the interior Bernoulli problem where the additional boundary condition similar to \eqref{1.4} is on the inner boundary, from the exterior Bernoulli problem where the additional boundary condition is on the outer boundary. The problem studied in this paper can be seen as a ``limit'' problem of the exterior boundary problem described in \cite{MR1752296}, since $\partial\Om$ has one connected component and $\Om$ is simply connected.\\

\par In comparison to the standard Bernoulli problems,  $(\mathcal{F})$ presents several additional distinctive features, both from the theoretical and numerical point of view. The difficulties here stem from the particular geometric setting. Indeed, the constraint $\Om\subset\mathds{R}^N_+$ is such that the hyperplane $\{x_1=0\}$ behaves like an obstacle for the domain $\Om$ and the free boundary $\partial\Om\setminus K$. It is clear from the results in \cite{MR2338115} that this constraint will be active as the optimal set $L^\star=(\partial\Oms\setminus K )\cap\{x_1=0\}$ is not empty. This type of constraint is difficult to deal with in shape optimization and there has been very few attempts, if any, at solving these problems.\\

\par From the theoretical point of view, the difficulties are apparent in \cite{MR2338115}, but a proof technique used for the standard Bernoulli problem may be adapted to our particular setup. 
Indeed, a Beurling's technique and a Perron argument were used, in the same way as in \cite{MR1752296,MR1777029,MR1885658}. 
\par Nevertheless, the proof of the existence and uniqueness of the free boundary is mainly theoretical and no numerical algorithm may be deduced to construct $\Gamma$.  From the numerical point of view, several problems arise that will be discussed in the next sections. The main issue is that $\Gamma$ is a free boundary but the set $L=(\partial\Om\setminus K )\cap\{x_1=0\}$ is a "free" set as well, in the sense that its length is unknown and should be obtained through the optimization process. In other words, the interface between $L$ and $\Gamma$ has to be determined and this creates a major difficulty for the numerical resolution.\\

\par The aim of this paper is twofold: on one hand we perform a detailed analysis of the geometrical properties of the free boundary  $\Gamma$ and in particular we are interested in the dependence of $\Gamma$ on $K$. On the other hand, we introduce an efficient algorithm in order to compute a numerical approximation of $\Om$. In this way we perform a complete analysis of the problem.\\
\indent First of all, using standard techniques for free boundary problems, we prove  symmetry and monotonicity properties of the free boundary.  These results are used further to prove the main theoretical result of the section in Subsection \ref{SectAsympt}, where the asymptotic behavior of the free boundary, as the length of the subset $K$ of the boundary diverges, is exhibited. The proof is based on a judicious cut-out of the optimal domain and on estimates of the solution of the associated partial differential equation to derive the variational formulation driving the solution of the ``limit problem''. 
\indent Secondly, we give a numerical algorithm for a numerical approximation of $\Om$. To determine the free boundary we use a shape optimization approach as in \cite{MR2511644,MR2013364,MR2183542}, where a penalization of one of the boundary condition in \eqref{1.1}-\eqref{1.4} using a shape functional is introduced. However, the original contribution of this paper regarding the numerical algorithm  comes from the way how  the "free" part $L$ of the boundary is handled. Indeed, it has been proved in the theoretical study presented in \cite{MR2338115} that the set $L=(\partial \Om\backslash K)\cap \{x_1=0\}$ has nonzero length. The only equation satisfied on $L$ is the Dirichlet condition, and a singularity naturally appears in the solution at the interface between $K$ and $L$ during the optimization process, due to the jump in boundary conditions. This singularity is a major issue for numerical algorithms: the usual numerical approaches for standard Bernoulli free boundary problems  \cite{MR2511644,MR2013364,MR2183542,MR1678201} cannot be used and a specific methodology has to be developed. A solution proposed in this paper consists in introducing a partial differential equation with special Robin boundary conditions depending on an asymptotically small parameter $\e$ and approximating the solution of the free boundary problem. We then prove in Theorem \ref{thm1} the convergence  of the approximate solution to the solution of the free boundary problem, as $\e$ goes to zero. In doing so we show the efficiency of a numerical algorithm that may be easily adapted to solve other problems where the free boundary meets a fixed boundary as well as free boundary problems with geometrical constraints or jumps in boundary conditions. Our implementation is based on a standard parameterization of the boundary using  splines. Numerical results show the efficiency of the approach.

\par The paper is organized as follows. Section \ref{SectionShape} is devoted to recalling basic concepts of shape sensitivity analysis. In Section \ref{geomSect}, we provide qualitative properties of the free boundary $\Gamma$, precisely we exhibit  symmetry and a monotonicity property  with respect to the length of the set $K$ as well as asymptotic properties of $\Gamma$. In Section \ref{SectPena},  the shape optimization approach for the resolution of the free boundary problem and a penalization of the p.d.e. to handle the jump in boundary conditions are introduced. In section \ref{SectDer}, the shape derivative of the functionals are computed and used in the numerical simulations of  $(\mathcal{F})$ in Section \ref{numScheme} and \ref{numres}.
\section{Shape sensitivity analysis}\label{SectionShape}

To solve the free boundary problem $(\mathcal{F})$, we formulate it as a shape optimization problem, i.e. as the minimization of a functional which depends on the geometry of the domains $\Om\subset\OO$. In this way we may study the sensitivity with respect to perturbations of the shape and use it in a numerical algorithm. The shape sensitivity analysis is also useful to study the dependence of $\Oms$ on the length of $K$, and in particular to derive the monotonicity of the domain  $\Oms$ with respect to  the length of $K$.\\

The major difficulty in dealing with sets of shapes is that they do not have a vector space structure. In order to be able to define shape derivatives and study the sensitivity of shape functionals, we need to construct such a structure for the shape spaces. In the literature, this is done by considering perturbations of an initial domain; see \cite{MR1855817,MR2512810,MR1215733}.\\

Therefore, essentially two types of domain perturbations are considered in general. The first one is a {\it method of perturbation of the identity operator}, the second one, the {\it velocity} or {\it speed method} is based on the deformation obtained by the flow of a velocity field. The speed method is more general than the method of perturbation of the identity operator, and the equivalence between deformations obtained by a family of transformations and deformations obtained by the flow of velocity field may be shown \cite{MR1855817,MR1215733}. The method of perturbation of the identity operator is a particular kind of domain transformation, and in this paper the main results will be given using a simplified speed method, but we point out that using one or the other is rather a matter of preference as several classical textbooks and authors rely on the method of perturbation of the identity operator as well. \\

For the presentation of the speed method, we mainly rely on the presentations in \cite{MR1855817,MR1215733}.  We also restrict ourselves to shape perturbations by {\it autonomous} vector fields, i.e. time-independent vector fields. Let $V:\mathds{R}^N\to\mathds{R}^N$ be an autonomous vector field. Assume that 
\begin{equation}
\label{vlip}V\in\D^k(\mathds{R}^N,\mathds{R}^N)=\{V\in\mathcal{C}^k(\mathds{R}^N,\mathds{R}^N),\ V\mbox{ has compact support} \},
\end{equation}
with $k\geq 0$.
\par For $\tau>0$, we introduce a family of transformations $T_t(V)(X)=x(t,X)$ as the solution to the ordinary differential equation
\begin{equation}\label{2.1.1}
\left\{\begin{array}{rclr}
\displaystyle \frac{d}{dt}x(t,X) & =& V(x(t,X)), & 0<t<\tau ,\\
x(0,X) &=& X\in\mathds{R}^N . &
\end{array}\right .
\end{equation}
For $\tau$ sufficiently small, the system \eqref{2.1.1} has a unique solution \cite{MR1215733}. The mapping $T_t$ allows to define a family of domains $\Om_t=T_t(V)(\Om)$ which may be used for the differentiation of the shape functional. We refer to  \cite[Chapter 7]{MR1855817}  and \cite[Theorem 2.16]{MR1215733} for Theorems establishing the regularity of transformations $T_t$.\\

It is assumed  that the shape functional $J(\Om)$ is well-defined for any measurable set $\Om\subset\mathds{R}^N$. We introduce the following notions of differentiability with respect to the shape
\begin{definition}[Eulerian semiderivative]\label{2.1.7}
Let $V\in\D^k(\mathds{R}^N,\mathds{R}^N)$ with $k\geq 0$, the {\it Eulerian semiderivative} of the shape functional $J(\Om)$ at $\Om$ in the direction $V$ is defined as the limit
\begin{equation}\label{2.1.8}
dJ(\Om;V)=\lim_{t\searrow 0}\frac{J(\Om_t)-J(\Om)}{t},
\end{equation}
when the limit exists and is finite.
\end{definition}

\begin{definition}[Shape Differentiability]\label{2.1.9}
The functional $J(\Om)$ is {\it shape differentiable} (or {\it differentiable} for simplicity) at $\Om$ if it has a Eulerian semiderivative at $\Om$ in all directions $V$ and the map
\begin{equation}\label{2.1.10}
V\mapsto dJ(\Om,V)
\end{equation}
is linear and continuous from $\D^k(\mathds{R}^N,\mathds{R}^N)$ into $\mathds{R}$. The map \eqref{2.1.10} is then sometimes denoted $\nabla J(\Om)$ and referred to as the {\it shape gradient} of $J$ and we have
\begin{equation}\label{2.1.10b}
dJ(\Om,V)=\langle\nabla J(\Om),V\rangle_{\D^{-k}(\mathds{R}^N,\mathds{R}^N),\D^k(\mathds{R}^N,\mathds{R}^N)}
\end{equation}
\end{definition}
When the data is smooth enough, i.e. when the boundary of the domain $\Om$ and the velocity field $V$ are smooth enough (this will be specified later on), the shape derivative has a particular structure: it is concentrated on the boundary $\partial\Om$ and depends only on the normal component of the velocity field $V$ on the boundary $\partial\Om$.   This result, often called {\it structure theorem} or {\it Hadamard Formula}, is fundamental in shape optimization and will be observed in Theorem \ref{thm_shapeder}.

\section{Geometric properties and asymptotic behaviour}\label{geomSect}

In shape optimization, once the existence and maybe uniqueness of an optimal domain have been obtained, an explicit representation of the domain, using a parameterization for instance usually cannot be achieved, except in some particular cases, for instance if the optimal domain has a simple shape such as a ball, ellipse or a regular polygon. On the other hand, it is usually possible to determine important geometric properties of the optimum, such as symmetry, connectivity, convexity for instance. In this section we show first of all that the optimal domain is symmetric with respect to the perpendicular bissector of the segment $K$, using a symmetrization argument. Then, we are interested in the asymptotic behaviour of the solution as the length of $K$ goes to infinity. We are able to show that the optimal domain $\Oms$ is monotonically increasing for the inclusion when  the length of $K$ increases, and that $\Oms$ converges, in a sense that will be given in Theorem \ref{strip}, to the infinite strip $(0,1)\times\mathds{R}$.
\par The proofs presented in Subsections \ref{symm} and \ref{subsectmono} are quite standard and similar ideas of proofs may be found e.g. in \cite{MR2512810,MR1752296,MR1777029,MR1885658}.
\subsection{Symmetry}\label{symm}
In this subsection, we derive a symmetry property of the free boundary. The interest of such a remark is intrinsic and appears useful from a numerical point of view too, for instance to test the efficiency of the chosen algorithm.
\par  In the two-dimensional case, we have the following result of symmetry:
 \begin{proposition}
 Let $\Omega ^\star$ be the solution of the free boundary problem
 \eqref{1.1}-\eqref{1.4}. Assume, without loss of generality that $(Ox_1)$ is the
 perpendicular bissector of $K$. Then, $\Oms$ is symmetric with
 respect to $(Ox_1)$.
 \end{proposition}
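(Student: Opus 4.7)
The plan is to deduce the symmetry from the uniqueness of the solution $\Omega^\star$ established in \cite{MR2338115}, by exhibiting the reflected domain as another solution of $(\mathcal{F})$. Let $\sigma:\mathds{R}^2\to\mathds{R}^2$ denote the orthogonal reflection with respect to $(Ox_1)$, that is $\sigma(x_1,x_2)=(x_1,-x_2)$. Since $(Ox_1)$ is by hypothesis the perpendicular bissector of $K$, we have $\sigma(K)=K$. Set $\widetilde\Omega:=\sigma(\Omega^\star)$ and $\widetilde u(x):=u^\star(\sigma(x))$, where $u^\star$ denotes the unique solution associated with $\Omega^\star$.

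First I would check that $\widetilde\Omega\in\OO$: openness and convexity are preserved by the isometry $\sigma$; the inclusion $\widetilde\Omega\subset\mathds{R}^2_+$ holds because $\sigma$ leaves the half-plane $\{x_1>0\}$ invariant; and $K=\sigma(K)\subset\sigma(\partial\Omega^\star)=\partial\widetilde\Omega$. Second I would verify that $(\widetilde\Omega,\widetilde u)$ satisfies \eqref{1.1}--\eqref{1.4}. The Laplacian is invariant under orthogonal changes of variables, so $-\Delta\widetilde u=0$ in $\widetilde\Omega$; the Dirichlet conditions $\widetilde u=1$ on $K$ and $\widetilde u=0$ on $\partial\widetilde\Omega\setminus K$ follow directly from $\sigma(K)=K$ and $\sigma(\partial\Omega^\star\setminus K)=\partial\widetilde\Omega\setminus K$; and the Bernoulli condition is preserved because $|\nabla\widetilde u(x)|=|(D\sigma)^\top\nabla u^\star(\sigma(x))|=|\nabla u^\star(\sigma(x))|=1$ on $\widetilde\Gamma:=(\partial\widetilde\Omega\setminus K)\cap\mathds{R}^2_+$, using that $D\sigma$ is orthogonal. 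Third, invoking the uniqueness statement of \cite{MR2338115} within the class $\OO$, one concludes $\widetilde\Omega=\Omega^\star$, which is exactly the claimed symmetry.

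There is essentially no analytical obstacle here: the argument is a reflection + uniqueness trick that converts the symmetry question into a well-posedness statement already available from \cite{MR2338115}. The only point requiring minor care is making the admissibility check for $\widetilde\Omega$ rigorous (in particular, that $\partial\widetilde\Omega\setminus K=\sigma(\partial\Omega^\star\setminus K)$ and that the reflected free boundary still lies in the closed upper half-plane). One could alternatively pursue a genuine Steiner symmetrization of $\Omega^\star$ in the $x_2$-direction and compare energies, as is done in \cite{MR2512810,MR1752296}, but given that uniqueness is already at our disposal the route above is both shorter and self-contained.
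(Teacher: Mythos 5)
Your proof is correct, but it follows a genuinely different route from the paper's. The paper recasts $(\mathcal{F})$ as the overdetermined minimization problem $(\B_0)$ for the Dirichlet energy $\jj(\Om,u)$, forms the \emph{Steiner symmetrization} $\widehat\Om$ of $\Oms$ with respect to $\{x_2=0\}$ together with the rearranged function $\widehat u$ built from the level sets $\om^\star(c)$, invokes P\'olya's inequality to get $J(\widehat\Om,\widehat u)\leq J(\Oms,u^\star)$, and then concludes $\Oms=\widehat\Om$ from uniqueness of the minimizer. You instead reflect $\Oms$ across $(Ox_1)$, check that the reflected pair $(\widetilde\Om,\widetilde u)$ is again an admissible solution of \eqref{1.1}--\eqref{1.4} (every condition being invariant under the isometry $\sigma$, which fixes $K$ and the half-plane), and conclude from the uniqueness of the solution to $(\mathcal{F})$ in $\OO$ proved in \cite{MR2338115}. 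Both arguments ultimately rest on that same uniqueness result, but yours is shorter and sidesteps the delicate point in the paper's proof, namely the verification that the rearranged function $\widehat u$ still satisfies the pointwise Bernoulli condition $|\nabla\widehat u|=1$ on the symmetrized free boundary (asserted in the paper with ``one may verify'' but not checked); by contrast, for an orthogonal reflection the invariance of $|\nabla u|$ is immediate. What the symmetrization route buys in exchange is robustness: it would still apply if one only knew uniqueness of the energy minimizer rather than of the overdetermined solution, and it is the template for situations where no exact symmetry of the data is available. For the statement at hand, your reflection-plus-uniqueness argument is fully adequate, and it also yields the paper's subsequent remark that the normal to $\Gamma$ at its intersection with $(Ox_1)$ is directed along $(Ox_1)$.
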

\begin{proof}
Like often, this proof is based on a symmetrization argument. It may be noticed that, according to the result stated in \cite[Theorem 1]{MR2338115}, $\Om$ is the unique solution of the overdetermined optimization problem
\begin{displaymath}\label{minSymm}
(\B_0):\left\{\begin{array}{ll}
\mbox{\text{minimize}} &\jj(\Om,u)\\ 
\mbox{\text{subject to}} & \Om\in\OO,  u\in H(\Om),
\end{array}\right.
\end{displaymath}
where 
$$
H(\Om)=\{u\in H^1(\Om ), u=1\textrm{ on }K, u=0\textrm{ on }\partial\Om \backslash K\textrm{ and }|\nabla u|=1\textrm{ on }\Gamma\},
$$
and
$$
\jj(\Om,u)=\displaystyle \int_{\Om}|\nabla u(x)|^2dx. 
$$
From now on, $\Om^\star$ will denote the unique solution of $(\B_0)$, $K$ being fixed.
We denote by $\widehat \Om$ the Steiner symmetrization of $\Om$ with respect to the hyperplane $x_2=0$, i.e.
$$
\widehat \Om =\left\{x=(x',x_2)\textrm{ such that }-\frac{1}{2}|\Om (x')|<x_2<\frac{1}{2}|\Om (x')|,x'\in \Om'\right\},
$$
where 
$$
\Om '=\{x'\in \mathds{R}\textrm{ such that there exists }x_2\textrm{ with }(x',x_2)\in \Om^\star\}
$$
and
$$
\Om (x')=\{x_2\in \mathds{R}\textrm{ such that }(x',x_2)\in \Om\}, \ x'\in \Om'.
$$
By construction, $\widehat\Om$ is symmetric with respect to the $(Ox_1)$ axis. Let us also introduce $\widehat u$, defined by
$$
\widehat u:x\in \widehat \Om \mapsto \sup \{c\textrm{ such that }x\in \widehat{\om^\star(c)}\},
$$
where $\om^\star (c)=\{x\in \Om^\star:u(x)\geq c\}$.
Then, one may verify that $\widehat u\in H(\widehat \Om)$ and  Poly\`a's inequality (see \cite{MR2512810}) yields
$$
J(\widehat\Om, \widehat u)\leq J(\Om^\star,u^\star).
$$
Since $(\Om^\star,u^\star)$ is a minimizer of $J$ and using the uniqueness of the solution of $(\B_0)$, we get $\Om^\star = \widehat \Om$.
\end{proof}
\begin{remark}
This proof yields in addition that the direction of the normal vector at the intersection of $\Gamma$ and $(Ox_1)$ is $(Ox_1)$. 
 \end{remark}

\subsection{Monotonicity}\label{subsectmono}

In this subsection, we show that $\Om^\star$ is monotonically increasing for the inclusion when the length of $K$ increases. For a given $a>0$, define $K_a=\{0\}\times[-a,a]$. Let $(\mathcal{F}_a)$ denote problem $(\mathcal{F})$ with $K_a$ instead of $K$ and denote $\Om_a$ and $u_a$ the corresponding solutions. We have the following result on the monotonicity of $\Om_a$ with respect to $a$.
\begin{theorem}\label{OmMono}
Let $0<a<b$, then $\Om_{a}\subset\Om_{b}$. 
\end{theorem}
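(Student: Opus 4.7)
My plan is to reduce the inclusion $\Omega_a\subset\Omega_b$ to an application of the Beurling/Perron comparison principle implicit in the existence proof of \cite{MR2338115}. That construction characterizes $\Omega_a$ as the smallest (for set inclusion) convex open $D\subset\mathds{R}^N_+$ with $K_a\subset\partial D$ whose associated harmonic function $u_D$, defined by $u_D=1$ on $K_a$ and $u_D=0$ on $\partial D\setminus K_a$, satisfies the sub-Bernoulli inequality $|\nabla u_D|\leq 1$ on the free part $(\partial D\setminus K_a)\cap\mathds{R}^N_+$. Granting this, it suffices to check that $\Omega_b$ itself qualifies as such an admissible competitor for $(\mathcal{F}_a)$.

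To do so, I would introduce the unique function $v$ harmonic in $\Omega_b$ with $v=1$ on $K_a$ and $v=0$ on $\partial\Omega_b\setminus K_a$, so that $v$ and $u_b$ share the same Dirichlet data everywhere on $\partial\Omega_b$ except on the non-empty set $K_b\setminus K_a$, where $u_b=1$ while $v=0$. The difference $w:=u_b-v$ is harmonic in $\Omega_b$, non-negative on $\partial\Omega_b$ and strictly positive on $K_b\setminus K_a$, so the strong maximum principle gives $w>0$ throughout $\Omega_b$. At any point of $\Gamma_b$, both $u_b$ and $v$ vanish, and Hopf's lemma --- justified by the $C^{2+\alpha}$-regularity of $\partial\Omega_b$ from \cite{MR2338115} together with the interior ball condition coming from convexity --- yields $|\nabla v|<|\nabla u_b|=1$ on $\Gamma_b$. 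Since $K_b\setminus K_a\subset\{x_1=0\}$ contributes no point to $\mathds{R}^N_+$, the set $(\partial\Omega_b\setminus K_a)\cap\mathds{R}^N_+$ coincides with $\Gamma_b$, and the desired inequality $|\nabla v|\leq 1$ holds on all of it. Thus $\Omega_b$ is admissible for $(\mathcal{F}_a)$ and the characterization yields $\Omega_a\subset\Omega_b$.

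The hard part is not the comparison argument itself, which is a standard application of the strong maximum principle and Hopf's lemma, but rather the verification that the Beurling/Perron framework of \cite{MR2338115} really does provide this kind of monotonic comparison in the present geometrically constrained setting. In particular, one must keep track of the fact that the Bernoulli condition is imposed only on $\Gamma$ and not on the free segment $L$ lying on the hyperplane $\{x_1=0\}$, so the notion of super/subsolution and the monotonicity property of the Perron construction have to be adapted to this constraint. Once this foundational point is in place, the argument sketched above goes through without further difficulty and in fact yields the strict inclusion $\Omega_a\subsetneq\Omega_b$.
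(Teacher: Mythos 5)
Your verification that $\Om_b$ is a ``supersolution'' for $(\mathcal{F}_a)$ is correct and cleanly executed: the auxiliary harmonic function $v$ with data $1$ on $K_a$ and $0$ on $\partial\Om_b\setminus K_a$ satisfies $0\leq v\leq u_b$, both vanish on $\Gamma_b$, and Hopf's lemma gives $|\nabla v|<|\nabla u_b|=1$ there; the observation that $(\partial\Om_b\setminus K_a)\cap\mathds{R}^N_+=\Gamma_b$ is also right. The genuine gap is exactly the point you flag and then set aside: the entire conclusion rests on a minimality characterization of $\Om_a$ as the smallest admissible convex domain whose state satisfies $|\nabla u|\leq 1$ on the free part. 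Nothing in the paper, and nothing quoted from \cite{MR2338115}, supplies such a statement in this constrained geometry; the reference is invoked only for existence, uniqueness, $C^{2+\alpha}$ regularity, tangential contact with $K$, and $L^\star\neq\emptyset$. In the Beurling/Perron construction the solution is built as an extremal element of a class of super- or subsolutions, but identifying that extremal element with the (unique) solution of $(\mathcal{F}_a)$, and checking that the class is stable in the presence of the obstacle $\{x_1=0\}$ and of the free segment $L$ on which no Bernoulli condition is imposed, is precisely the content one would have to prove. As written, your argument is conditional on an unproved lemma that carries all the difficulty.

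The paper avoids this dependency entirely with a self-contained contradiction argument: assuming $\Om_a\not\subset\Om_b$, it shrinks $\Om_a$ by a homothety $x\mapsto x/t$ until the rescaled domain first fits inside $\Om_b$ at some $t_{min}>1$, producing a touching point $y\in\Gamma_{t_{min}}\cap\Gamma_b$. The rescaled state $u_{t_{min}}(x)=u_a(t_{min}x)$ has $|\nabla u_{t_{min}}|=t_{min}$ on its free boundary; the maximum principle gives $u_b\geq u_{t_{min}}$, and Hopf's lemma at $y$ forces $1=|\nabla u_b(y)|\geq t_{min}>1$, a contradiction. Note that the ingredients are the same ones you use (comparison of two nonnegative harmonic functions vanishing at a common boundary point, plus Hopf), but they are deployed at a touching point between a scaled copy of $\Om_a$ and $\Om_b$, where the Bernoulli condition is available on \emph{both} sides of the comparison; this is what makes the argument close without any appeal to a Perron-type minimality. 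If you want to salvage your route, you would need to actually prove the ``smallest supersolution'' characterization in the constrained setting; otherwise the homothety argument is the more economical path.
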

\begin{proof}
According to \cite{MR2338115}, $(\mathcal{F}_a)$ has a solution for every $a>0$ and $\partial\Om_a$ is $C^{2+\alpha}$, $0<\alpha<1$. 
We argue by contradiction, assuming that $\Om_a \not\subset \Om_b$. Introduce, for $t\geq 1$, the set
$$
\Om_t =\{x\in \Om_a : tx\in \Om_a\}.
$$
We also denote by $K_t:=\{0\}\times [-ta,ta]$ and $\Gamma_t :=\partial \Om_t\backslash (\partial \Om_t\cap (Ox_2))$.  The domain
$\Om_t$ is obviously a convex set included in $\Om_a$ for $t\geq 1$. Now denote 
$$
t_{min}:= \inf \{t\geq 1, \Om_t\subset\Om_b\}.
$$
On one hand, $\Om_a\subset\Om_b$ is equivalent to $t_{min}=1$. On the other hand, if $\Om_a\not\subset\Om_b$, then $t_{min}>1$ and for $t$ large enough, we clearly have $\Om_t\subset\Om_b$, therefore $t_{min}$ is finite. In addition, if $\Om_a\not\subset\Om_b$ we have $\Gamma_{t_{min}}\cap\Gamma_b\neq\emptyset$. Now, choose $y\in \Gamma_{t_{min}}\cap\Gamma_b$. Let us introduce 
$$
u_{t_{min}}:x\in \Om_t\mapsto u_a(t_{min}x).
$$
Then, $u_{t_{min}}$ verifies
\begin{align*}
-\Delta u_{t_{min}} & =0\quad  \textrm{in }\Om_{t_{min}},\\
u_{t_{min}} & =1\quad \textrm{on }K_{t_{min}},\\
u_{t_{min}} & =0\quad  \textrm{on }\Gamma_{t_{min}},
\end{align*}
so that, in view of $\Om_{t_{min}}\subset\Om_b$ and $K_{t_{min}}\subset K_b$, the maximum principle yields $u_b\geq u_{t_{min}}$ in $\Om_{t_{min}}$. Consequently, the function $h=u_b-u_{t_{min}}$ is harmonic in $\Om_{t_{min}}$, and since $h (y)=0$, $h$ reaches its lower bound at $y$. Applying Hopf's lemma (see \cite{evans}) thus yields $\partial _n h (y)<0$ so that $|\nabla u_b(y)|\geq |\nabla u_{t_{min}}(y)|$. Hence,
$$
1=|\nabla u_b(y)|\geq |\nabla u_{t_{min}}(y)|=t_{min}>1,
$$
which is absurd. Therefore we necessarily have $t_{min}=1$ and $\Om_a\subset\Om_b$.
\end{proof}

\subsection{Asymptotic behaviour}\label{SectAsympt}

We may now use the symmetry property of the free boundary to obtain the  asymptotic properties of $\Om_a$ when the length of $K$ goes to infinity, i.e. we are interested in the behaviour of the free boundary $\Gamma_a$ as $a\to\infty$. 
\par Let us say one word on our motivations for studying such a problem. First, this problem can be seen as a limit problem of the ``unbounded case'' studied in \cite[Section 5]{MR1885658} relative to the one phase free boundary problem for the $p$-Laplacian with non-constant Bernoulli boundary condition. Second, let us notice that the change of variable $x'=x/a$ and $y'=y/a$ transforms the free boundary \eqref{1.1}-\eqref{1.4} problem into
\begin{eqnarray}
 -\Delta z & =&  0 \quad  \mbox{in} \  \Om ,\label{homot1} \\
 z & =&  1 \quad  \mbox{on}\  K_1 ,\\
 z & = & 0 \quad  \mbox{on}\  \partial\Om\setminus K_1 ,\\
 |\nabla z| & =& a \quad  \mbox{on} \  \Gamma=(\partial\Om\setminus K_1)\cap\mathds{R}^N_+ ,\label{homot4}
\end{eqnarray}
which proves that the solution of \eqref{homot1}-\eqref{homot4} is $h_{1/a}(\Omega_a)$, where $h_{1/a}$ denotes the homothety centered at the origin, with ratio $1/a$. Hence such a study permits also to study the role of the Lagrange multiplier associated with the volume constraint of the problem
$$
\left\{\begin{array}{l}
\min C(\Omega)\textrm{ where }C(\Omega)=\min \left\{\frac{1}{2}\int_\Omega |\nabla u_\Omega|^2,u=1\textrm{ on }K_1, \ u=0\textrm{ on }\partial \Omega\backslash K_1\right\} \\
\Omega\textrm{ quasi-open}, \ |\Omega|=m,
\end{array}
\right.
$$
since, as enlightened in \cite[Chapter 6]{MR2512810}, the optimal domain is the solution of \eqref{homot1}-\eqref{homot4} for a certain constant $a>0$. The study presented in this section permits to link the Lagrange multiplier to the constant $m$ appearing in the volume constraint and to get some information on the limit case $a\to +\infty$.
\ \\

\par We actually show that $\Gamma_a$ converges, in an appropriate sense, to the line parallel to $K_a$ and passing through the point $(1,0)$. Let us introduce the infinite open strip
$$
S=]0,1[ \times \mathds{R},
$$
and the open, bounded rectangle
$$
R(b)=]0,1[ \times ]-b,b[ \subset S. 
$$
Let 
$$
u_S:x\in S\mapsto 1-x_1.
$$ 
Observe that, since $\Om_a$ is solution of the free boundary problem \eqref{1.1}-\eqref{1.4}, the curve $\Gamma_a\cap \{-a\leq x_2\leq a\}$ is the graph of a concave $\mathcal{C}^{2,\alpha}$ function $x_2\mapsto \psi_a (x_2)$ on $[-a,a]$.
We have the following result
\begin{theorem}\label{strip} The domain $\Om_a$ converges to the strip $S$ in the sense that for all $b>0$, we have
\begin{equation}
\label{ab1}\psi_a \to 1, \textrm{ uniformly in }[-b,b]\textrm{, as }a\to +\infty.
\end{equation}
We also have the convergence 
$$
u_a\to u_S \quad\mbox{ in }H^1(R(b))\quad\mbox{ as } a\to\infty,
$$
for the solution $u_a$ of \eqref{1.1}-\eqref{1.4}.
\end{theorem}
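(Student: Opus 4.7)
By symmetry, the concave function $\psi_a$ attains its maximum $M_a$ at $x_2=0$, and the outer unit normal to $\Gamma_a$ at $(M_a,0)$ is $(1,0)$. Set $v_a(x_1):=u_a(x_1,0)$ on $[0,M_a]$: then $v_a(0)=1$, $v_a(M_a)=0$, and $v_a'(M_a)=-1$ (from $\nabla u_a=-\nu=(-1,0)$ at $(M_a,0)$). The key point is that $v_a$ is convex: by harmonicity, $v_a''(x_1)=-\partial_{x_2}^2 u_a(x_1,0)$, so it suffices that $x_2=0$ be a maximum of $u_a(x_1,\cdot)$. This follows from the maximum principle applied to $w:=\partial_{x_2}u_a$ on $\Om_a\cap\{x_2>0\}$: $w=0$ on $\{x_2=0\}$ by symmetry, $w=0$ on the portions of $\partial\Om_a$ lying on $\{x_1=0\}$ (where $u_a$ is constant on $K_a$ and on $L_a$), and on $\Gamma_a\cap\{x_2>0\}$ the identity $\nabla u_a=-\nu$ combined with $\psi_a'\le 0$ (concavity and maximum at $0$) gives $w\le 0$. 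Convexity of $v_a$ and its endpoint data then yield $1=v_a(0)\ge v_a(M_a)+v_a'(M_a)(0-M_a)=M_a$, whence $\psi_a\le M_a\le 1$.

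\textbf{Constant limit $\psi_\infty\equiv c$.} By Theorem \ref{OmMono}, $\Om_a$ is increasing in $a$, so for each fixed $x_2$ the map $a\mapsto\psi_a(x_2)$ is nondecreasing (once $a\ge|x_2|$) and bounded above by $1$, giving a pointwise limit $\psi_\infty(x_2)\in[0,1]$. Concavity passes to pointwise limits, so $\psi_\infty$ is concave on $\mathds{R}$, and any concave function $\mathds{R}\to[0,1]$ is necessarily constant (otherwise the decreasing-slope property would force divergence to $-\infty$ in one direction, contradicting the lower bound). Hence $\psi_\infty\equiv c$ for some $c\in[0,1]$. The chord estimate $\psi_a(x_2)\ge M_a(1-|x_2|/a)$ (concavity plus $\psi_a(\pm a)\ge 0$) together with $\psi_a\le M_a$ upgrades this to uniform convergence $\psi_a\to c$ on every $[-b,b]$.

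\textbf{Identification $c=1$ and $H^1$ convergence.} Concavity also forces $|\psi_a'|$ and $|\psi_a''|$ to go to $0$ uniformly on $[-b-1,b+1]$, so the free-boundary pieces $\Gamma_a\cap\{|x_2|\le b+1\}$ have uniformly bounded $C^{2,\alpha}$ norms. Boundary Schauder estimates applied to $\Delta u_a=0$, with smooth, well-separated Dirichlet data on $\{0\}\times[-b-1,b+1]$ and on these pieces, then yield uniform $C^{2,\alpha}$ bounds for $u_a$ on $\overline{R(b)\cap\Om_a}$; after the standard flattening of the varying boundary and Arzel\`a--Ascoli, $u_a\to u_\infty$ in $C^2$, where $u_\infty$ is bounded and harmonic on $(0,c)\times\mathds{R}$, equal to $1$ on $\{x_1=0\}$ and $0$ on $\{x_1=c\}$. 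Uniqueness of bounded harmonic functions on a strip with such Dirichlet data forces $u_\infty(x_1,x_2)=1-x_1/c$, and passing to the limit in $|\nabla u_a|=1$ on $\Gamma_a$ yields $1/c=1$, i.e., $c=1$. Finally, $H^1(R(b))$ convergence of the zero-extension of $u_a$ to $u_S$ follows from $C^1$ convergence on $\overline{R(b)\cap\Om_a}$ together with $|R(b)\setminus\Om_a|\to 0$.

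The delicate point is passing to the limit in the Bernoulli condition, which demands uniform $C^1$ control of $u_a$ up to the free boundary. Concavity readily yields $C^0$ and $C^1$ bounds on $\psi_a$, but the uniform $C^{2,\alpha}$ estimate needed to invoke Schauder either relies on the boundary regularity for Bernoulli problems (Kinderlehrer--Nirenberg-type results) or can be replaced by a more hands-on energy-comparison argument on a cut-out $R(b+1)\cap\Om_a$ against an explicit test function approaching $u_S$.
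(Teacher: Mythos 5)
Your first two steps are correct and essentially parallel the paper's own argument: the paper also works with $v_a(x_1)=u_a(x_1,0)$, proves its convexity, deduces from the endpoint data $v_a(0)=1$, $v_a(m_a)=0$, $v_a'(m_a)=-1$ that $m_a\le 1$ and $\Om_a\subset S$, and then obtains uniform convergence of $\psi_a$ to a constant $m_\infty\le 1$ from the monotonicity of Theorem \ref{OmMono} together with the concavity chord estimate. Your derivation of the convexity of $v_a$ via the maximum principle applied to $w=\partial_{x_2}u_a$ on $\Om_a\cap\{x_2>0\}$ is a legitimate alternative to the paper's computation $v_a''=-\mathcal{H}_a v_a'\ge 0$ based on the Laplace--Beltrami decomposition on the convex level sets; just note that $w$ is unbounded near the junction points of $K_a$, $L_a$ and $\Gamma_a$, so invoking the maximum principle there deserves a word of justification.

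The genuine gap is in your third step. You assert that concavity forces $|\psi_a''|\to 0$ uniformly, and hence that the pieces $\Gamma_a\cap\{|x_2|\le b+1\}$ carry uniform $C^{2,\alpha}$ bounds. Concavity gives no such thing: it yields $|\psi_a'|\to 0$ uniformly and, since $\psi_a''\le 0$, only $\|\psi_a''\|_{L^1([-b-1,b+1])}=|\psi_a'(-b-1)-\psi_a'(b+1)|\to 0$; a concave graph with uniformly small slope can still have arbitrarily large curvature at isolated points. Uniform-in-$a$ $C^{2,\alpha}$ regularity of the free boundary is precisely the hard analytic input that your Schauder/Arzel\`a--Ascoli route needs in order to pass to the limit in the pointwise condition $|\nabla u_a|=1$ on $\Gamma_a$, and you defer it to unproved ``Kinderlehrer--Nirenberg-type'' estimates. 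The paper avoids this difficulty entirely: it never takes a pointwise limit of the Bernoulli condition. Instead it proves $\nabla(u_a-u_\infty)\to 0$ in $L^2$ on a cut-out region $Q_a(b)$ bounded by gradient flow lines of $u_a$ (chosen so that $\partial_\nu u_a=0$ on the lateral cuts), using only Green's formula, the already established convergence of $\psi_a$, and the boundary conditions; the identification $m_\infty=1$ is then obtained by testing against an auxiliary harmonic function $\varphi$ on $Q_a(b)$, which uses the Neumann datum $\partial_n u_a=-1$ only in integrated form. To repair your proof you must either establish the uniform free-boundary regularity or replace the last step by such a variational energy-comparison argument --- as you yourself suspect in your closing remark.
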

\begin{proof}
Let us introduce the function
$$
v_a(x_1)=u_a(x_1,0) .
$$
According to \cite[Proposition 5.4.12]{MR2512810}, we have for a domain $\Om$ of class $\mathcal{C}^2$ and $u:\overline{\Om}\to\mathds{R}$ of class $\mathcal{C}^2$
\begin{equation}
\label{ab2}\Delta u=\Delta_\Gamma u +\mathcal{H}\dn u+\dn^2 u ,
\end{equation}
where $\Delta_\Gamma u$ denotes the Laplace-Beltrami operator. Applying formula \eqref{ab2} in the domains
$$
\om_a(c) :=\{x\in\Om_a,u_a(x)> c\}, 
$$
we get $\Delta_\Gamma u_a=0$ on $\partial\om_a(c)$,  $\Delta u_a=0$ due to \eqref{1.1} and thus
\begin{equation}
\label{ab2a} \dn^2 u_a = -\mathcal{H}_a\dn u_a \quad\mbox{ on } \partial\om_a(c),
\end{equation}
where $n$ is the outer unit normal vector to $\om_a(c)$ and $\mathcal{H}_a(x)$ denotes here the curvature of $\partial\om_a(c)$ at a point $x\in\partial\om_a(c)$. Thanks to the symmetry of $\Om_a$ with respect to the $x_1$-axis, we have $\dn u_a(x_1,0)= v_a'(x_1)$ and $\dn^2 u_a(x_1,0)=v_a''(x_1)$ for $x_1\geq 0$. According to \cite{MR2338115}, the sets $\om_a(c)$ are convex. Therefore $\mathcal{H}_a$ is positive on $\partial\om_a(c) $ and $v_a(x_1)$ is non-increasing. Thus
\begin{equation}
\label{ab2b} v_a''(x_1) = -\mathcal{H}_a(x_1,0)v_a'(x_1)\geq 0,
\end{equation}
which means that $v_a$ is convex. Let $m_a$ be such that $\Gamma_a\cap (\mathds{R}\times\{0\}) =(m_a,0)$, i.e. the first coordinate of the intersection of the $x_1$-axis and the free boundary $\Gamma_a$. The function $v_a$ satisfies
\begin{align}
\label{ab3.1}-v_a''(x_1) &\leq 0\quad \mbox{for } x_1\in]0,m_a[,\\
\label{ab3.2}v_a(0) & =1,\\
\label{ab3.3}v_a(m_a) & =0, \\
\label{ab3.4}v_a'(m_a) & =-1.
\end{align}
In view of \eqref{ab3.1}, $v_a$ is convex on $[0,m_a]$. Since $v_a(0)=1$ and $v_a(m_a)=0$, then 
$$
v_a(x_1)\leq 1-\frac{x_1}{m_a}.
$$
Furthermore, $m_a\leq 1$, otherwise, due to the convexity of $v_a$, the Neumann condition \eqref{ab3.4} would not be satisfied. Since $\Om_a$ is convex, this proves that $\Om_a\subset S$ and that $\Om_a$ is bounded.\\
Moreover, from Theorem \ref{OmMono}, the map $a\mapsto \Omega_a$ is nondecreasing with respect to the inclusion. It follows that the sequence $(m_a)$ is nondecreasing and bounded since $\Om _a\subset S$. Hence, $(m_a)$ converges to $m_\infty\leq 1$. \\
Let us define 
$$
u_\infty(x_1)=1-\frac{x_1}{m_\infty}.
$$
The previous remarks ensure that for every $a>0$, $v_a\leq u_\infty$.
\par Let $\mathcal{D}(a)$ be the line containing the points $(0,a)$ and $(\psi_a(b),b)$ and $\mathcal{T}(a)$ the line tangent to $\Gamma_a$ at $(\psi_a(b),b)$. Let $s_\mathcal{D}(a)$ and $s_\mathcal{T}(a)$ denote the slopes of $\mathcal{D}(a)$ and $\mathcal{T}(a)$, respectively. For a fixed $b\in (0,a)$, we have
$$
s_\mathcal{D}(a)=\frac{b-a}{\psi _a(b)}\to -\infty\quad \mbox{ as } a\to\infty,  
$$
since $0\leq \psi_a\leq 1$. Due to the convexity of $\Om_a$, we also have $s_\mathcal{T}(a)<s_\mathcal{D}(a)$. Therefore 
$$
s_\mathcal{T}(a)\to -\infty\quad \mbox{ as } a\to\infty  .
$$
Thus, the slopes of the tangents to $\Gamma_a$ go to infinity in $\Om_a\cap R(b)$. Furthermore, due to the concavity of the function $\psi_a$, we get, by construction of $\mathcal{D}(a)$,
$$
\frac{m_a}{a}(a-x_2)\leq \psi_a(x_2)\leq m_\infty, \ \forall a>0, \ \forall x_2\in [-b,b].
$$
Hence, we obtain the pointwise convergence result:
\begin{equation}\label{limPsi}
\lim_{a\to +\infty}\psi _a(x_2)=m_\infty, \ \forall x_2\in [-b,b],
\end{equation}
which proves the uniform convergence of $\psi_a$ to $m_\infty$ as $a\to +\infty$.\\
\par From now on, with a slight misuse of notation, $u_a$ will also denote its extension by zero to all of $S$. Finally, let us prove the convergence 
$$
u_a\to u_\infty \quad\mbox{ in }H^1(R_\infty(b)), \textrm{ as }a\to \infty,
$$
where $R_\infty(b)$ denotes the rectangle whose edges are: $\Sigma_1=\{0\}\times [-b,b]$, $\Sigma_2=[0,m_\infty]\times \{b\}$, $\Sigma_3=\{m_\infty\}\times [-b,b]$ and $\Sigma_4=[0,m_\infty]\times \{-b\}$. \\
According to the zero Dirichlet conditions on $\Sigma_3$ and using Poincar\'e's inequality, proving the $H^1$-convergence is equivalent to show that
\begin{equation}
\int_{R_\infty(b)} |\nabla (u_a-u_\infty)|^2 \to 0\quad\mbox{ as }a\to\infty .
\end{equation}
For our purposes, we introduce the curve $\wsa$ described by the points $X_{a,b}$ solutions of the following ordinary differential equation 
\begin{equation}\label{flow}
\left\{\begin{array}{ll}
\displaystyle \frac{d X_{a,b}}{dt}(t)=\nabla u _a(X_{a,b}(t)), & t>0, \\
X_{a,b}(0)=(0,b). &  
\end{array}
\right.
\end{equation}
 The curve $\wsa$ is naturally extended along its tangent outside of $\Om_a$. $\wsa$ can be seen as the curve originating at the point $(0,b)$ and perpendicular to the level set curves of $\Om_a$. We also introduce the curve $\widetilde\Sigma_4(a)$, symmetric to $\wsa$ with respect to the $x_1$-axis. $\widetilde\Sigma_4(a)$ is obviously the set of points $Y_{a,b}$ solutions of the following ordinary differential equation
\begin{equation}
\left\{\begin{array}{ll}
\displaystyle \frac{d Y_{a,b}}{dt}(t)=\nabla u _a(Y_{a,b}(t)), & t>0, \\
Y_{a,b}(0)=(0,-b). &  
\end{array}
\right.
\end{equation}
Then the set $Q_a(b)$ is defined as the region delimited by the $x_2$-axis on the left, the line parallel to the $x_2$-axis and passing through the point $(m_\infty ,0)$ on the right and the curves $\wsa$ and $\wsb$ at the top and bottom. We also introduce the set $\wsc := \overline{Q_a(b)}\cap (\{m_\infty\} \times\mathds{R})$.  See Figure \ref{rq} for a description of the sets $R_\infty(b)$ and $Q_a(b)$. \\

Since $R_\infty(b)\subset Q_a(b)$ (see Figure \ref{rq}), we have
$$
\int_{R_\infty(b)} |\nabla (u_a-u_\infty)|^2 \leq \int_{Q_a(b)} |\nabla (u_a-u_\infty)|^2 .
$$

\begin{figure}[!]
\begin{center}
\psfrag{s1}{$\Sigma_1$}
\psfrag{s2}{$\Sigma_2$}
\psfrag{s3}{$\Sigma_3$}
\psfrag{s4}{$\Sigma_4$}
\psfrag{s2t}{$\widetilde\Sigma_2(a)$}
\psfrag{s4t}{$\widetilde\Sigma_4(a)$}
\psfrag{r}{$R_\infty(b)$}
\psfrag{q}{$Q_a(b)$}
\psfrag{g}{$\Gamma_a$}
\psfrag{a}{$x_1$}
\psfrag{b}{$x_2$}
\includegraphics[width=8cm]{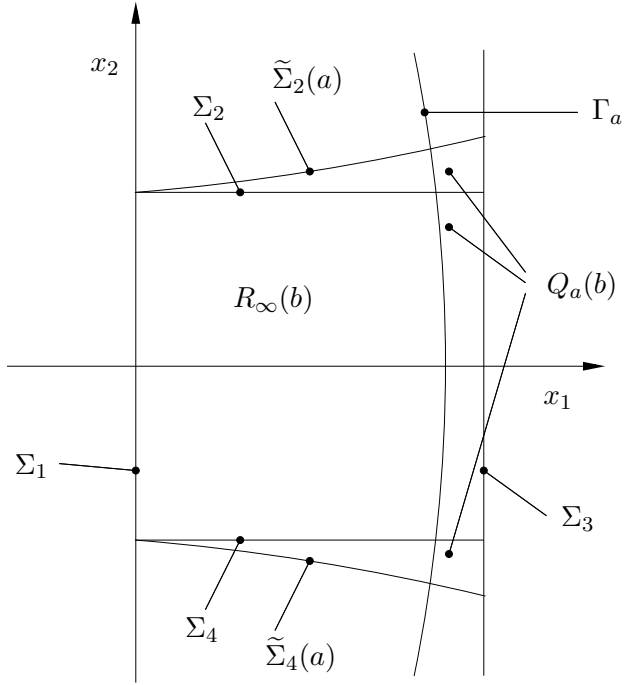}
\end{center}
\caption{The sets $R_\infty(b)$ and $Q_a(b)$.}\label{rq}
\end{figure}

Using Green's formula, we get
\begin{eqnarray*}
\int_{Q_a(b)} |\nabla (u_a-u_\infty)|^2 & = & \int_{Q_a(b)\cap\Om_a} |\nabla (u_a-u_\infty)|^2+ \int_{Q_a(b)\setminus \Om_a} |\nabla (u_a-u_\infty)|^2  \\
&=& -\int_{Q_a(b)\cap\Om_a} (u_a-u_\infty) \Delta (u_a-u_\infty)-\int_{Q_a(b)\setminus\Om_a} (u_a-u_\infty) \Delta (u_a-u_\infty)\\
& & +\int_{\partial Q_a(b)} (u_a-u_\infty) \partial_\nu (u_a-u_\infty)+\sum_\pm \int_{\Gamma_a\cap Q_a(b)} (u_a-u_\infty)\partial_{n^\pm} (u_a-u_\infty),
\end{eqnarray*}
where $\nu$ denotes the outer normal vector to $Q_a(b)$ on the boundary $\partial Q_a(b)$, $n$ is the outer normal vector to $\Om_a$ on the boundary $\Gamma_a$ and $\partial_{n^\pm}$ is the normal derivative on $\Gamma_a$ in the exterior or interior direction, the positive sign denoting the exterior direction to $\Om_a$. The functions $u_a$ and $u_\infty$ are harmonic, and using the various boundary conditions for $u_a$ and $u_\infty$ we get
\begin{align*}
\int_{Q_a(b)}|\nabla (u_a-u_\infty)|^2 =& \int_{\wsa\cup\wsb} (u_a-u_\infty) \partial_\nu (u_a-u_\infty) +\int_{\Gamma_a\cap Q_a(b)} u_\infty.
\end{align*}
According to \eqref{limPsi} and using $u_\infty=0$ on $\Sigma_3$, we get
$$ 
\int_{\Gamma_a\cap Q_a(b)} u_\infty = \int_{-b}^b u_\infty(\psi_a(x_2))\sqrt{1+\psi_a'(x_2)^2} dx_2 \to 0\quad\mbox{ as }a\to\infty ,
$$
where we have also used the fact that $\psi'_a(x_2)\to 0$ for all $x_2\in [-b,b]$.  The limit function $u_\infty$ depends only on $x_1$, thus we have $\partial_\nu u_\infty=0$ on  $\Sigma_2\cup\Sigma_4$. Denote now $\widetilde{\psi}_a:  [0, m_\infty]\to \mathds{R}$ the graph of $\wsa$ (which implies that $-\widetilde{\psi}_a$ is the graph of $\wsb$). The slope of the tangents to the level sets of $u_a$ converge to $-\infty$ as $a\to\infty$ in a similar way as for $\Gamma_a$, therefore $\partial_{x_2} u_a(x_1,\widetilde{\psi}_a(x_1))$ converges uniformly to $0$ in $[0,m_\infty]$ as $a\to\infty$, and in view of \eqref{flow} we have that $\widetilde{\psi}_a\to b$ uniformly in $ [0, m_\infty]$  and since $(u_a-u_\infty)$ is uniformly bounded in $\Om_a$ we have
\begin{equation}\label{gradQb}
\int_{\wsa\cup\wsb} (u_a-u_\infty)\partial_\nu u_\infty  \to 0\quad\mbox{ as }a\to\infty .
\end{equation}
In view of the definition of $Q_a(b)$, the outer normal vector $\nu$ to $Q_a(b)$ at a given point on $\wsa\cup\wsb$ is colinear with the tangent vector to the level set curve of $\Om_a$ passing though the same point. Therefore $\partial_\nu u_a=0$ on $\wsa\cup\wsb$ and we obtain finally
\begin{equation}\label{gradQb2}
0\leq
\int_{R_\infty(b)} |\nabla (u_a-u_\infty)|^2 \leq \int_{Q_a(b)} |\nabla (u_a-u_\infty)|^2\to 0\quad\mbox{ as }a\to\infty .
\end{equation}
\par The end of the proof consists in proving that $m_\infty=1$. Let us introduce the test function $\varphi$ as the solution of the partial differential equation
\begin{equation}
\left\{\begin{array}{ll}
\displaystyle -\Delta \varphi =0 & \textnormal{in } Q_a(b)  \\
\varphi =0 & \textnormal{on }\Sigma _1\cup\wsa\cup\wsb\\
\varphi =1 & \textnormal{on }\widetilde\Sigma _3(a).
\end{array}
\right.
\end{equation}
It can be noticed that $\varphi \in H^1(R_\infty(b))$.\\
Using Green's formula and the same notations as previously, we get
\begin{eqnarray*}
\int_{Q_a(b)} \nabla (u_a-u_\infty)\cdot \nabla \varphi & = & \int_{Q_a(b)\cap \Om_a} \nabla (u_a-u_\infty)\cdot \nabla \varphi +\int_{Q_a(b)\backslash \Om_a}\nabla (u_a-u_\infty)\cdot \nabla \varphi \\
 & = & -\int_{Q_a(b)\cap \Om_a} \varphi \Delta (u_a-u_\infty)-\int_{Q_a(b)\backslash \Om_a} \varphi \Delta (u_a-u_\infty)\\
& & +\int_{\partial Q_a(b)} \varphi \partial_\nu (u_a-u_\infty)+\sum_\pm \int_{\Gamma_a\cap Q_a(b)}  \varphi\partial_{n^\pm} (u_a-u_\infty)\\
& =& \int_{\wsa\cup\wsb} \varphi \partial_\nu (u_a-u_\infty) +\int_{\widetilde{\Sigma}_3(a)}\varphi \partial_n (u_a-u_\infty)-\int_{\Gamma_a\cap Q_a(b)} \varphi \\
& = & \int_{\widetilde{\Sigma}_3(a)}\frac{\varphi }{m_\infty}-\int_{\Gamma_a\cap Q_a(b)} \varphi.
\end{eqnarray*}
According to \eqref{limPsi}, and since we deduce from \eqref{gradQb2} that
$$
\int_{Q_a(b)} \nabla (u_a-u_\infty)\cdot \nabla \varphi \to 0\quad\mbox{ as }a\to\infty ,
$$
we get
$$
 \int_{\widetilde\Sigma _3(a)}\frac{\varphi }{m_\infty}-\int_{\widetilde\Sigma _3(a)} \varphi =0,
$$
which leads to
$$ 
\left(\frac{1}{m_\infty}-1\right)|\widetilde\Sigma _3(a)|=0.
$$
In other words, $m_\infty =1$, which ends the proof.
\end{proof}


\section{A penalization approach}\label{SectPena}

\subsection{Shape optimization problems}\label{sop}

From now on we will assume that $N=2$, i.e. we solve the problem in the plane. The problem for $N>2$ may be treated with the same technique, but the numerical implementation becomes tedious. A classical approach  to solve  the free boundary problem  is to penalize one of the boundary conditions in the over-determined system \eqref{1.1}-\eqref{1.4} within a shape optimization approach to find the free boundary.  For instance one may consider the well-posed problem 
\begin{align}
 \label{2.1}-\Delta u_1 & =  0 \quad  \mbox{in} \  \Om , \\
 \label{2.2}u_1 & =  1 \quad  \mbox{on}\  K ,\\
 \label{2.3}u_1 & =  0 \quad  \mbox{on}\  \partial\Om\setminus K .
\end{align}
and enforce the second boundary condition \eqref{1.4} by solving the problem
\begin{equation}\label{min1}
(\B_1):\left\{\begin{array}{ll}
\mbox{\text{minimize}} & J(\Om)\\ 
\mbox{\text{subject to}} & \Om\in\OO,\\
\end{array}\right.
\end{equation}
with the functional $J$ defined by
\begin{equation}
 \label{2.4} J(\Om)=\int_{\Gamma}(\dn u_1+1)^2\, d\Gamma.
\end{equation}
Indeed, using the maximum principle, one sees immediately that $u_1\geq 0$ in $\Om$ and since $u_1=0$ on $\partial\Om\setminus K$, we obtain $\dn u_1\leq 0$ on $\partial\Om\setminus K$. Thus $|\nabla u_1|=-\dn u_1$ on $\partial\Om\setminus K$ and the additional boundary condition \eqref{1.4} is equivalent to $\dn u_1=-1$ on  $\Gamma$. Hence, \eqref{2.4} corresponds to a penalization of condition \eqref{1.4}. On one hand, if we denote $u_1^\star$ the unique solution of \eqref{1.1}-\eqref{1.4} associated to the optimal set $\Oms$, we have 
$$J(\Oms)=0,$$ 
so that the minimization problem \eqref{min1} has a solution. On the other hand, if $J(\Oms)=0$, then  $|\nabla u_1^\star|\equiv 1$ on $\Gamma$ and therefore $u_1^\star$ is solution of \eqref{1.1}-\eqref{1.4}. Thus $(\mathcal{F})$ and $(\B_1)$ are equivalent.\\

Another possibility is to penalize boundary condition \eqref{1.3} instead of \eqref{1.4} as in $(\B_1)$, in which case we consider the problem
\begin{align}
 \label{6.1}-\Delta u_2 & =  0 \quad  \mbox{in} \  \Om , \\
 \label{6.2}u_2 & =  1 \quad  \mbox{on}\  K ,\\
 \label{6.3}u_2 & =  0 \quad  \mbox{on}\  L ,\\
 \label{6.4}\dn u_2 & =  -1 \quad  \mbox{on}\  \Gamma ,
\end{align}
and the shape optimization problem is
\begin{equation}\label{min2}
(\B_2):\left\{\begin{array}{ll}
\mbox{\text{minimize}} & J(\Om)\\ 
\mbox{\text{subject to}} & \Om\in\OO,\\
\end{array}\right.
\end{equation}
with the functional $J$ defined by
\begin{equation}
 \label{2.8} J(\Om)=\int_{\Gamma}(u_2)^2\, d\Gamma .
\end{equation}
Although the two approaches $(\B_1)$ and  $(\B_2)$ are completely satisfying from a theoretical point of view, it is numerically easier to minimize a domain integral rather than a boundary integral as in \eqref{2.4} and \eqref{2.8}. Therefore, a third classical approach is to solve
\begin{equation}\label{min3}
(\B_3):\left\{\begin{array}{ll}
\mbox{\text{minimize}} & J(\Om)\\ 
\mbox{\text{subject to}} & \Om\in\OO,\\
\end{array}\right.
\end{equation}
with the functional $J$ defined by
\begin{equation}
J(\Om)=\int_{\Om}(u_1-u_2)^2.
\end{equation}
For the standard Bernoulli problems \cite{Beurling,FluRu97}, solving $(\B_3)$ is an excellent approach as demonstrated in \cite{MR2511644,MR2013364,MR2183542}. However, we are still not quite satisfied with it in our case. Indeed, it is well-known that due to the jump in boundary conditions at the interface between $L$ and $\Gamma$ in \eqref{6.3}-\eqref{6.4}, the solution $u_2$ has a singular behaviour in the neighbourhood of this interface. To be more precise, let us define the points 
$$
\{A_1,A_2\}:=\overline{L}\cap\overline{\Gamma},
$$
and the polar coordinates $(r_i,\theta_i)$ with origin the points $A_i$, $i=1,2$, and such that $\theta_i=0$ corresponds to the semi-axis tangent to $\Gamma$; see Figure \ref{polarfig} for an illustration. Then, in the neighbourhood of $A_i$, $u_2$ has a singularity of the type 
$$
S_i(r_i,\theta_i)=c(A_i)\sqrt{r_i}\cos(\theta_i/2), 
$$
where $c(A_i)$ is the so-called {\it stress intensity factor} (see e.g. \cite{MR775683,MR0226187}). \\

\begin{figure}
\begin{center}
\psfrag{a}{$\Gamma$}
\psfrag{b}{$L$}
\psfrag{c}{$A_i$}
\psfrag{d}{$\theta_i=0$}
\psfrag{e}{$\theta_i=\pi$}
\includegraphics[width=6cm]{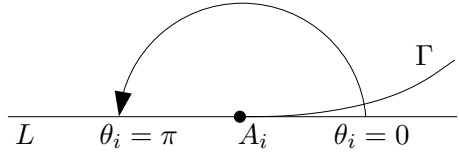}
\end{center}
\caption{Polar coordinates with origin $A_i$, and such that $\theta_i=0$ corresponds to the semi-axis tangent to $\Gamma$.}\label{polarfig}
\end{figure}

These singularities are problematic for two reasons. The first difficulty is numerical: these singularities may produce inacurracies when computing the solution near the points $\{A_1,A_2\}$, unless the proper numerical setting is used. It also possibly produces non-smooth deformations of the shape, which might create in turn undesired angles in the shape during the optimization procedure. The second difficulty is theoretical: since $\Gamma$ is a free boundary with the constraint $\Om\subset\mathds{R}^N_+$, the points $\{A_1,A_2\}$ are also "free points", i.e. their optimal position is unknown in the same way as $\Gamma$ is unknown. This means that the sensitivity with respect to those points has to be studied, which is doable but tedious, although interesting. The main ingredient in the computation of the shape sensitivity with respect to these points is the evaluation of the stress intensity factors $c(A_i)$. \\

\subsection{Penalization of the partial differential equation}

In order to deal with the aforementionned issue, we introduce a fourth approach, based on the penalization of the jump in the boundary conditions \eqref{6.3}-\eqref{6.4} for $u_2$. Let $\e\geq 0$ be a small real parameter, and let $\psi_\e\in\mathcal{C}(\mathds{R}^+,\mathds{R}^+)$ be a decreasing penalization function such that $\psi_\e\geq 0$, $\psi_\e$ has compact support $[0,\beta_\e]$, and with the properties
\begin{align}
 \label{6.5aa}\beta_\e\to 0 &\mbox{ as }\e\to 0,\\
 \label{6.5a}\psi_\e(0)\to \infty &\mbox{ as }\e\to 0,\\
 \label{6.5b}\psi_\e(x_1)\to 0 &\mbox{ as }\e\to 0,\quad\forall x_1>0 .
\end{align}
A simple example of such function is given by
\begin{equation}
\label{6.15} \psi_\e(x_1)=\e^{-1}(\max(1-\e^{-q}x_1,0))^2\mathds{1}_{\mathds{R}^+},
\end{equation}
with $q>0$. Note that $\psi_\e$ is decreasing, has compact support and verifies assumptions \eqref{6.5aa}-\eqref{6.5b}, with $\beta_\e=\e^q$. We will see in Proposition \ref{prop1} that the choice of $\psi_\e$ is conditioned by the shape of the domain. Then we consider the problem with Robin boundary conditions
\begin{align}
 \label{6.5}-\Delta \ub & =  0 \quad  \mbox{in} \  \Om , \\
 \label{6.6}\ub & =  1 \quad  \mbox{on}\  K ,\\
 \label{6.7}\dn \ub+\psi_\e(x_1)\ub & =-1 \quad  \mbox{on} \ \partial\Om\setminus K .
\end{align}
The function $\ub$ is a penalization of $u_2$ in the sense that $\ub\to u_2$ as $\e\to 0$ in $H^1(\Om)$ if $\psi_\e$ is properly chosen. The following Proposition ensures the $H^1$-convergence of $\ub$ to the desired function. It may be noticed that an explicit choice of function $\psi_\e$ providing the convergence is given in the statement of this Proposition.
\begin{proposition}\label{prop1}
Let $\Om$ be an open bounded domain. Then for $\psi_\e$ given by \eqref{6.15}, there exists a unique solution to \eqref{6.5}-\eqref{6.7} which satisfies
\begin{equation}
 \ub\to u_2\ \mbox{ in }H^1(\Om)\ \mbox{ as }\e\to 0.
\end{equation}
\end{proposition}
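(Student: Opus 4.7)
The plan is to recast \eqref{6.5}--\eqref{6.7} variationally on $V:=\{v\in H^1(\Om)\ :\ v|_K=0\}$: find $\ub\in H^1(\Om)$ with $\ub|_K=1$ such that
\begin{equation*}
a_\e(\ub,v):=\int_\Om\nabla\ub\cdot\nabla v\,dx+\int_{\partial\Om\setminus K}\psi_\e(x_1)\,\ub v\,ds=-\int_{\partial\Om\setminus K}v\,ds\quad\forall v\in V.
\end{equation*}
Since $K$ has positive surface measure, Poincar\'e's inequality on $V$ combined with the nonnegativity of the boundary term makes $a_\e$ coercive, and continuity follows from the trace theorem together with the pointwise bound $\psi_\e\leq \e^{-1}$. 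Lax--Milgram then yields the existence and uniqueness part of the statement. Testing with $\ub-g$ for a fixed smooth lifting $g$ of the Dirichlet data on $K$, and estimating the resulting boundary term via Young's inequality together with $\int_{\partial\Om\setminus K}\psi_\e\,ds=O(\e^{q-1})$ (coming from \eqref{6.15}), I would derive the uniform a priori bound
\begin{equation*}
\|\ub\|_{H^1(\Om)}^2+\int_{\partial\Om\setminus K}\psi_\e\,\ub^2\,ds\leq C.
\end{equation*}

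Along a subsequence $\e_k\to 0$ one then extracts $\ubk\rightharpoonup u^\star$ weakly in $H^1(\Om)$, strongly in $L^2(\Om)$, and by compactness of the trace also strongly in $L^2(\partial\Om)$. The identity $\psi_{\e_k}(0)=\e_k^{-1}$ together with the a priori bound gives $\int_L\ubk^2\,ds\leq C\e_k\to 0$, so $u^\star=0$ on $L$, while $\ubk|_K=1$ yields $u^\star=1$ on $K$. To identify the interior equation and the Neumann condition on $\Gamma$, I would pass to the limit in the variational identity against test functions $v\in V$ that are smooth and compactly supported in $\overline\Om\setminus(K\cup\overline L)$: for such $v$, as soon as $\e_k^q<\mathrm{dist}(\mathrm{supp}\,v,L)$ one has $\psi_{\e_k}\equiv 0$ on $\mathrm{supp}\,v\cap\Gamma$, so the penalization term drops out and the limit identity is exactly the weak formulation of \eqref{6.1}--\eqref{6.4}. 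Density of such $v$ in $\{v\in V : v=0\textrm{ on }L\}$ together with the uniqueness of $u_2$ (itself a standard Lax--Milgram argument) then forces $u^\star=u_2$ and yields convergence of the whole family.

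The key step, and the main obstacle, is upgrading to strong $H^1$ convergence. Subtracting the two weak formulations tested against $w_\e:=\ub-u_2\in V$ and using Green's identity for $u_2$ on the $C^{2+\alpha}$ boundary, after cancellation of the $\int_\Gamma w_\e$ contributions, one obtains the energy identity
\begin{equation*}
\|\nabla w_\e\|_{L^2(\Om)}^2+\int_{\partial\Om\setminus K}\psi_\e\,\ub^2\,ds=\int_{\partial\Om\setminus K}\psi_\e\,\ub u_2\,ds-\int_L w_\e\,ds-\int_L(\dn u_2)\,w_\e\,ds.
\end{equation*}
The two $L$-integrals tend to zero by the $L^2(L)$-smallness of $w_\e=\ub|_L$ established above, using $\dn u_2\in L^2(L)$. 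The crux is the penalization term: since $u_2|_L=0$, Cauchy--Schwarz reduces it to controlling $\int_\Gamma\psi_\e u_2^2\,ds$. Here the explicit stress-intensity behavior $u_2\sim c(A_i)\sqrt{r_i}$ near the free corners $A_1,A_2$ recalled in Section \ref{sop}, the $C^{2+\alpha}$ geometry of $\Gamma$, and the explicit profile \eqref{6.15} (supported on $\{x_1\leq\e^q\}$) allow a direct estimate of this quantity by a positive power of $\e$ provided $q$ is taken large enough---this is the precise role of the exponent $q$ in \eqref{6.15}. Once this is established, the nonnegative quantity $\int\psi_\e\ub^2$ is absorbed on the left-hand side, giving $\|\nabla w_\e\|_{L^2(\Om)}\to 0$ and hence, via Poincar\'e, the desired strong convergence $\ub\to u_2$ in $H^1(\Om)$.
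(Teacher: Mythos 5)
Your core argument is essentially the paper's: the decisive step in both is the energy identity obtained by testing the equation for the difference $w_\e=\ub-u_2$ against itself (your identity is the paper's \eqref{6.12} up to regrouping, since $w_\e=-v_\e$), followed by the same two ingredients --- smallness of the penalization term on $\Gamma$ near the corners $A_i$, proved via the expansion $u_2=c(A_i)\sqrt{r}\cos(\theta/2)+(\text{regular})$ together with the tangency of $\Gamma$ to $\{x_1=0\}$, and the bootstrap $\psi_\e(0)\Vert \ub\Vert_{L^2(L)}^2\leq C$ to kill the $L$-integrals. You do add two things: the Lax--Milgram existence/uniqueness argument, which the statement of the Proposition claims but the paper's proof omits; and a slightly more economical grouping, since you only need $\int_\Gamma\psi_\e u_2^2\to 0$ (roughly $\e^{-1}f(\be)^2\to 0$, with $f$ the local graph of $\Gamma$), whereas the paper requires the stronger $\Vert\psi_\e u_2\Vert_{L^2(\Gamma)}\to 0$ (roughly $\e^{-1}f(\be)\to 0$), so your constraint on $q$ is milder. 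The weak-compactness/identification paragraph is correct but redundant once the energy identity is in place.

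One stated estimate is wrong and must be repaired: $\int_{\partial\Om\setminus K}\psi_\e\,ds$ is \emph{not} $O(\e^{q-1})$. On $L\subset\{x_1=0\}$ the integrand is identically $\psi_\e(0)=\e^{-1}$, so $\int_L\psi_\e\,ds=\e^{-1}|L|\to\infty$; and on $\Gamma$ the support $\{x_1\leq\e^q\}$ has arclength of order $f(\e^q)\gg\e^q$, precisely because of the tangency. Consequently your a priori bound cannot be derived by testing with $\ub-g$ for a generic lifting $g$ of the Dirichlet data: since an $H^{1/2}$ trace cannot jump, any such $g$ has nontrivial trace on $L$ near $\partial K$, and $\int_L\psi_\e g^2\,ds=\e^{-1}\int_L g^2\,ds$ blows up. The natural fix is to take $g=u_2$ itself (which vanishes on $L$), at which point the a priori estimate merges with your final energy identity and the only quantity left to control is $\int_\Gamma\psi_\e u_2^2\,ds$ --- exactly the paper's computation $\psi_\e(0)\int_0^{\be}ff'\,dx_1\leq\psi_\e(0)f(\be)^2$; alternatively, absorb $\tfrac12\int_{\partial\Om\setminus K}\psi_\e \ub^2$ into the left-hand side by Young's inequality and bootstrap the $L$-terms from the same identity. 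With that repair the argument closes, at the same level of rigor as the paper (both of you also tacitly assume $\dn u_2\in L^2(L)$, which deserves a word given the $r^{-1/2}$ behaviour of $\nabla u_2$ at the $A_i$).
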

\begin{proof}
In the sequel, $c$ will denote a generic positive constant which may change its value throughout the proof and does not depend on the parameter $\e$.
\par We shall prove that the difference 
$$
v_\e=u_2-\ub .
$$
converges to zero in $H^1(\Om)$. The remainder $v_\e$ satisfies, according to \eqref{6.1}-\eqref{6.4} and \eqref{6.5}-\eqref{6.7}
\begin{align}
 \label{6.8}-\Delta v_\e & =  0 \quad  \mbox{in} \  \Om , \\
 \label{6.9}v_\e & =  0 \quad  \mbox{on}\  K ,\\
 \label{6.10}\dn v_\e+\psi_\e(0)v_\e & = 1+\dn u_2  \quad  \mbox{on}\  L ,\\
 \label{6.11}\dn v_\e+\psi_\e(x_1)v_\e & = \psi_\e(x_1) u_2 \quad  \mbox{on} \ \Gamma .
\end{align}
Multiplying by $v_\e$ on both sides of \eqref{6.8}, integrating on $\Om$ and using Green's formula, we end up with 
\begin{equation}
\label{6.12}\int_\Om |\nabla v_\e|^2 +\int_{\partial\Om} (v_\e)^2\psi_\e  =\int_\Gamma \psi_\e u_2 v_\e +\int_L (1+\dn u_2)v_\e . 
\end{equation}
Since $v_\e=0$ on $K$ we may apply Poincar\'e's Theorem and \eqref{6.12} implies
\begin{equation}
\label{6.13}\nu\Vert v_\e\Vert^2_{H^1(\Om)} \leq  c\left(\Vert \psi_\e u_2 \Vert_{L^2(\Gamma)} \Vert v_\e\Vert_{L^2(\Gamma)} +  \Vert 1+\dn u_2 \Vert_{L^2(L)} \Vert v_\e\Vert_{L^2(L)}\right),
\end{equation}
According to the trace Theorem and Sobolev's imbedding Theorem, we have
\begin{align*}
\Vert v_\e\Vert_{L^2(\Gamma)} &\leq c\Vert v_\e\Vert_{H^{1/2}(\Gamma)}\leq c\Vert v_\e\Vert_{H^{1}(\Om)}, \\
\Vert v_\e\Vert_{L^2(L)}& \leq c\Vert v_\e\Vert_{H^{1/2}(L)}\leq c\Vert v_\e\Vert_{H^{1}(\Om)}.
\end{align*}
Hence, according to \eqref{6.13}, we get
\begin{equation}
\label{6.14}\Vert v_\e\Vert_{H^1(\Om)} \leq c \Vert \psi_\e u_2 \Vert_{L^2(\Gamma)} + c\Vert 1+\dn u_2 \Vert_{L^2(L)}.
\end{equation}
Now we prove that $\Vert \psi_\e u_2 \Vert_{L^2(\Gamma)}\to 0$ as $\e\to 0$.  We may assume that the system of cartesian coordinates $(O,x_1,x_2)$ is such that the origin $O$ is one of the points $A_1$ or $A_2$ and that $\Gamma$ is locally above the $x_1$-axis; see Figure \ref{concave}.
\begin{figure}
\begin{center}
\psfrag{a}{$x_1$}
\psfrag{b}{$x_2$}
\psfrag{c}{$\Gamma$}
\psfrag{d}{$A_i$}
\includegraphics[width=6cm]{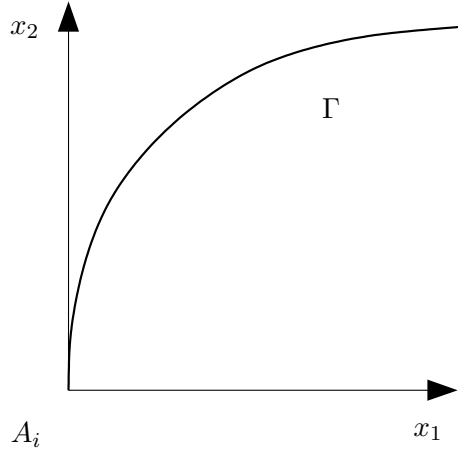}
\end{center}
\caption{$\Gamma$ is locally the graph of a convex function, with a tangent to the $x_2$-axis.}\label{concave}
\end{figure}
Since $\Om$ is convex, there exist $\delta>0$ and two constants $\alpha>0$ and $\beta$ such that for all $x_1\in (0,\delta)$, $\Gamma$ is the graph of a convex function $f$ of $x_1$. For our choice of $\psi_\e$, since $\textrm{supp}~\psi_\e=[0,\beta_\e]$, we have the estimate
\begin{align*}
\Vert \psi_\e u_2 \Vert_{L^2(\Gamma)}^2 & = \int_\Gamma(\psi_\e u_2)^2\\
& \leq \psi_\e(0)^2\int_0^{\be}(u_2)^2\sqrt{1+f'(x_1)^2}\, dx_1.
\end{align*}
According to \cite{MR775683,MR0226187} and our previous remarks in section \ref{sop}, we have $u_2=\sqrt{r}\cos(\theta/2)+u_\infty$, with $u_\infty\in H^2(\Om)$, and $(r,\theta)$ are the polar coordinates defined previously with origin $0$. Thus there exists a constant $c$ such that
$$|u_2|\leq c\sqrt{r}\cos(\theta/2)$$
in a neighborhood of $0$ with $\theta\in (0,\pi/2)$. Indeed, $u_\infty$ is $H^2$ therefore $C^1$ in a neighborhood of 0 and then has an expansion of the form: $u_\infty=c_s r+o(r)$, as $r\to 0$. Note that $r=\sqrt{x_1^2+x_2^2}$ and thus $r=\sqrt{x_1^2+f(x_1)^2}$ on $\Gamma$. Then
\begin{align*}
\Vert \psi_\e u_2 \Vert_{L^2(\Gamma)}& \leq c\psi_\e(0)\left(\int_0^{\be}(\sqrt{r}\cos(\theta/2))^2\sqrt{1+f'(x_1)^2}\, dx_1\right)^{1/2}\\
& \leq c\psi_\e(0)\left(\int_0^{\be}\sqrt{(x_1^2+f(x_1)^2)(1+f'(x_1)^2)}\, dx_1\right)^{1/2}.
\end{align*}
The function $f$ is convex and $f(0)=0$, thus $f'>0$ for $\e$ small enough. Since the boundary $\Gamma$ is tangent to the $(Ox_2)$ axis, we have 
\begin{align*}
f'(x_1)\to \infty &\ \mbox{ as }x_1\to 0^+,\\
x_1=o(f(x_1)) &\ \mbox{ as }x_1\to 0^+ .
\end{align*}
Thus, for $\e>0$ small enough
\begin{align*}
\Vert \psi_\e u_2 \Vert_{L^2(\Gamma)} & \leq c\psi_\e(0)\left(\int_0^{\be}f(x_1)f'(x_1)\, dx_1\right)^{1/2}\\
& \leq c\psi_\e(0)\left(f(\be)^2\right)^{1/2}= c\psi_\e(0) f(\be) .
\end{align*}
Since $f(x_1)\to 0$ as $x_1\to 0$, we may choose $\psi_\e(0)$ and $\be$ in order to obtain $\psi_\e(0) f(\be)\to 0$ as $\e\to 0$ and
\begin{equation}
\label{6.16} \Vert \psi_\e u_2\Vert_{L^2(\Gamma)}\to 0\quad \mbox{ as }\e\to 0.
\end{equation}
Then, in view of \eqref{6.14}, we may deduce that $\Vert v_\e\Vert_{H^1(\Om)}$ is bounded for the appropriate choice of $\psi_\e$. Consequently, $\Vert v_\e\Vert_{L^2(\Gamma)}$ and $\Vert v_\e\Vert_{L^2(L)}$ are also bounded. Using \eqref{6.12}, we may also write
\begin{align}
\notag \psi_\e(0) \Vert v_\e\Vert^2_{L^2(L)} &= \int_{L} (v_\e)^2\psi_\e \leq  \int_{\partial\Om} (v_\e)^2\psi_\e \\
\label{6.17} &  \leq \Vert \psi_\e u_2 \Vert_{L^2(\Gamma)} \Vert v_\e\Vert_{L^2(\Gamma)} + \Vert 1+\dn u_2 \Vert_{L^2(L)} \Vert v_\e\Vert_{L^2(L)}.
\end{align}
Since $\psi_\e(0) \to\infty$ as $\e\to 0$ and all terms in \eqref{6.17} are bounded, we necessarily have 
$$
\Vert v_\e\Vert_{L^2(L)}\to 0\mbox{ as }\e\to 0.
$$
Finally going back to \eqref{6.13} and using the previous results, we obtain 
$$ \Vert v_\e\Vert_{H^1(\Om)}\to 0\mbox{ as }\e\to 0,$$
and this proves  $\ub\to u_2$ as $\e\to 0$, in $H^1(\Om)$.
\end{proof}
The following theorem gives a mathematical justification of the numerical scheme implemented in section \ref{numScheme} to find the solution of the free Bernoulli problem $(\mathcal{F})$, based on the use of a penalized functional $J_\e$ defined by
\begin{equation}\label{6.18}
J_\e (\Om )=\int_{\Om}(u_{2,\e}-u_1)^2 ,
\end{equation}
where $u_1$ is the solution of (\ref{2.1})-(\ref{2.3}) and $u_{2,\e}$ is the solution of (\ref{6.5})-(\ref{6.7}).
 \begin{theorem}\label{thm1}
 One has
 $$
 \lim_{\e \to 0}\inf_{\Om\in\mathcal{O}}(J_\e(\Om)-J(\Om))=0. 
 $$
 \end{theorem}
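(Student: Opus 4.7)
The plan is to establish the limit via matching upper and lower bounds for $\inf_{\Om \in \mathcal{O}}(J_\e(\Om) - J(\Om))$, both anchored in Proposition \ref{prop1}.

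For the upper bound $\limsup_{\e\to 0}\inf_\Om(J_\e - J) \leq 0$, I would evaluate the difference at the Bernoulli optimum $\Oms$. The unique free-boundary solution $u^\star$ of \eqref{1.1}-\eqref{1.4} satisfies simultaneously the boundary conditions of \eqref{2.1}-\eqref{2.3} and of \eqref{6.1}-\eqref{6.4} (combining $|\nabla u^\star|=1$ with $u^\star=0$ on $\Gamma^\star$ gives $\partial_n u^\star=-1$ there), so uniqueness for each problem yields $u_1 = u_2 = u^\star$ on $\Oms$. Hence $J(\Oms) = 0$ and
\begin{equation*}
J_\e(\Oms) - J(\Oms) = J_\e(\Oms) = \Vert u_{2,\e} - u_2\Vert^2_{L^2(\Oms)} \xrightarrow[\e \to 0]{} 0
\end{equation*}
by Proposition \ref{prop1}, giving immediately $\inf_\Om(J_\e - J) \leq J_\e(\Oms) \to 0$.

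For the matching lower bound $\liminf_{\e\to 0}\inf_\Om(J_\e - J) \geq 0$, I would set $v_\e := u_{2,\e} - u_2$ and expand
\begin{equation*}
J_\e(\Om) - J(\Om) = \Vert v_\e\Vert^2_{L^2(\Om)} + 2\int_\Om v_\e(u_2 - u_1) \geq -2\Vert v_\e\Vert_{L^2(\Om)}\sqrt{J(\Om)}
\end{equation*}
via Cauchy-Schwarz, after discarding the non-negative first term. I would then argue that the infimum over $\mathcal{O}$ can be computed as the infimum over a uniformly bounded subfamily of $\mathcal{O}$: minimizing sequences with $|\Om|\to\infty$ make both $J(\Om)$ and $J_\e(\Om)$ blow up in a comparable fashion, so such sequences cannot achieve the infimum of the difference; and Theorem \ref{OmMono} combined with the boundedness of $\Oms$ supplies the relevant geometric control. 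On this restricted subfamily, the energy estimates in the proof of Proposition \ref{prop1} can be rendered uniform, since the Poincaré and trace constants remain bounded for convex domains of bounded diameter containing $K$. This yields $\sup_\Om \Vert v_\e\Vert_{L^2(\Om)} \to 0$ while $\sqrt{J(\Om)}$ stays uniformly bounded, delivering the lower bound.

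The main obstacle is precisely this uniformization: transforming the pointwise-in-$\Om$ convergence of Proposition \ref{prop1} into a uniform $L^2$-bound over a suitable restricted subfamily of admissible domains, while justifying rigorously that no minimizing sequence escapes this subfamily. The convexity constraint and the prescribed inclusion $K \subset \partial\Om$ are exactly what keep the functional-analytic constants in check and make the uniformization feasible.
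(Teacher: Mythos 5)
Your upper-bound half reproduces, almost verbatim, the paper's \emph{entire} proof: the authors observe that Proposition \ref{prop1} gives $J_\e(\Om)\to J(\Om)$ for each fixed $\Om$, that $u_1=u_2=u^\star$ on $\Oms$ so that $J(\Oms)=0$, and then sandwich $0\le\lim_{\e\to0}\inf_{\Om}J_\e(\Om)\le\lim_{\e\to0}J_\e(\Oms)=J(\Oms)=0$. In other words, the paper reads the theorem as convergence of the optimal value $\inf_\Om J_\e$ to $\inf_\Om J=0$ and never attempts the uniform-in-$\Om$ estimate that the literal expression $\inf_{\Om}(J_\e(\Om)-J(\Om))$ would demand. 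You have taken the statement literally and therefore assigned yourself the extra task of proving $\liminf_{\e\to0}\inf_{\Om}(J_\e(\Om)-J(\Om))\ge 0$; that is where the genuine gap sits, and you correctly identify it yourself without closing it.

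The Cauchy--Schwarz reduction to controlling $\sup_{\Om}\Vert u_{2,\e}-u_2\Vert_{L^2(\Om)}\sqrt{J(\Om)}$ is fine as far as it goes, but the uniformization you then invoke is not available from the paper's results, and your sketch of it is circular. First, the estimates in the proof of Proposition \ref{prop1} are not uniform over $\OO$: beyond Poincar\'e and trace constants, they rest on the local behaviour of $\Gamma$ near the corner points $A_i$ --- the convexity of the local graph $f$, the tangency of $\Gamma$ to the axis $\{x_1=0\}$, and the rate at which $f(\be)\to 0$ --- properties that hold for the optimizer but can degenerate for arbitrary admissible domains, so $\Vert\psi_\e u_2\Vert_{L^2(\Gamma)}$ need not tend to zero uniformly in $\Om$. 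Second, the claim that minimizing sequences with $|\Om|\to\infty$ ``cannot achieve the infimum of the difference'' because $J$ and $J_\e$ ``blow up in a comparable fashion'' presupposes exactly the lower bound you are trying to establish: a priori $J_\e(\Om)-J(\Om)$ could tend to $-\infty$ along such a sequence, and Theorem \ref{OmMono} concerns only the monotone family of \emph{optimal} domains $\Om_a$, so it gives no geometric control over general elements of $\OO$. If you adopt the paper's (evidently intended) reading of the statement, your first paragraph already constitutes a complete proof; if you insist on the literal reading, the lower bound requires a new uniform estimate that neither your argument nor the paper supplies.
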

 \begin{proof}
 The main ingredient of this proof is the result stated in Proposition \ref{prop1}. Indeed, this proposition yields in particular the convergence of $u_{2,\e}$ to $u_2$ in $L^2(\Om)$, when $\Om$ is a fixed element of $\mathcal{O}$. It follows immediately that
 $$
 J_\e (\Om)\to J(\Om),\textrm{ as }\e\to 0.
 $$
 Let us denote by $\Om^\star$ the solution of the free Bernoulli problem $(\mathcal{F})$. Then, we obviously have
 $$
 \inf_{\Om\in\mathcal{O}}J_\e(\Om)\leq J_\e(\Om^\star).
 $$
 Then, going to the limit as $\e \to 0$ yields
 $$
 0\leq \lim_{\e\to 0}\inf_{\Om\in\mathcal{O}}J_\e(\Om)\leq \lim_{\e\to 0}J_\e(\Om^\star)= J(\Om^\star)=0.
 $$
 \end{proof}
\begin{remark}
Theorem \ref{thm1} does not imply the existence of solutions for the problem $\inf \{J_\varepsilon (\Om), \Om\in \mathcal{O}\}$ and the following questions remain open: (i) existence of a minimizer $\Om_\varepsilon^\star$ for this problem, (ii) compactness of $(\Om_\varepsilon^\star)$ for an appropriate topology of domains. These problems appear difficult since to solve it, we probably need to establish a Sverak-like theorem for the Laplacian with Robin boundary conditions and some counter examples (see e.g. \cite{MR1458457}) suggest that this is in general not true.
\par Nevertheless, if (i) and (ii) are true, Theorem \ref{thm1} implies the convergence as $\varepsilon\to 0$, of $\Om_\varepsilon^\star$ to $\Om^\star$, the solution of  \eqref{1.1}-\eqref{1.4}.
\end{remark}

\section{Shape derivative for the penalized Bernoulli problem}\label{SectDer}

In order to stay in the class of domains $\OO$, the speed $V$ should satisfy
\begin{align}
\label{a1}V(x)=0 & \quad\forall x\in K, \\
\label{a2}V(x)\cdot n(x)<0 & \quad\forall x\in L.
\end{align}
Condition \eqref{a1} will be taken into account in the algorithm, and \eqref{a2} will be guaranteed by our optimization algorithm. We have the following result for the shape derivative  $dJ_\e(\Om;V)$ of $J_\e(\Om)$
\begin{theorem}\label{thm_shapeder}
The shape derivative  $dJ_\e(\Om;V)$ of $J_\e$ at $\Om$ in the direction $V$ is given by
\begin{eqnarray*}
dJ_\e(\Om;V)& =&\int_{\Gamma}\left(\nabla p_1\cdot\nabla u_1+\nabla p_2\cdot\nabla \ub+p_2\mathcal{H}+(u_1-\ub)^2\right) V\cdot n\, d\Gamma,\\
&& +\int_{L}\left(\nabla p_1\cdot\nabla u_1-\nabla p_2\cdot\nabla \ub\right) V\cdot n\, dL,
\end{eqnarray*}
where $\mathcal{H}$ is the mean curvature of $\Gamma$ and $p_1$, $p_2$ are given by \eqref{9.1}-\eqref{9.2} and \eqref{10.1}-\eqref{10.4}, respectively.
\end{theorem}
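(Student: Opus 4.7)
The approach is the classical adjoint method of shape sensitivity, as developed in \cite{MR1215733,MR1855817}. One starts from the transport identity
\[
dJ_\e(\Om;V) = \int_\Om 2(u_1-\ub)(u_1'-\ub')\,dx + \int_{\partial\Om}(u_1-\ub)^2\,V\cdot n\, ds,
\]
where $u_1'$ and $\ub'$ denote the local shape derivatives of the two states. To eliminate their dependence in the volume integral, I would introduce two adjoint states: $p_1$ solving $-\Delta p_1 = 2(u_1-\ub)$ in $\Om$ with $p_1=0$ on $\partial\Om$ (pure Dirichlet, mirroring the BC of $u_1$); and $p_2$ solving $-\Delta p_2 = -2(u_1-\ub)$ in $\Om$ with $p_2=0$ on $K$ and the adjoint Robin condition $\dn p_2+\psi_\e(x_1)p_2=0$ on $\partial\Om\setminus K$. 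These should be the adjoint systems referenced in the statement.

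Applying Green's identity to $\int_\Om 2(u_1-\ub)u_1'\,dx$ and using $p_1=0$ on $\partial\Om$ together with the Dirichlet shape derivative $u_1'=-\dn u_1\,V\cdot n$ on $\partial\Om$, the $u_1'$-volume contribution is converted into $\int_{\partial\Om}\dn u_1\,\dn p_1\,V\cdot n\,ds$. Since $u_1=p_1=0$ on $\partial\Om\setminus K$ their gradients are purely normal there, so this agrees with $\int_{\partial\Om\setminus K}\nabla u_1\cdot\nabla p_1\,V\cdot n\,ds$. The $\ub'$-volume term is handled analogously, but now $\ub'$ satisfies a shape-perturbed Robin problem whose boundary data involves tangential gradients and the mean curvature $\mathcal{H}$ of $\partial\Om$; this is the standard Robin shape-derivative formula recorded in \cite{MR1855817,MR1215733}. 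After Green's identity, the curvature term $p_2\mathcal{H}$ in the final expression emerges precisely from this step, i.e.\ from differentiating the Robin surface integral in the weak formulation for $\ub$.

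The final step is to decompose $\partial\Om=K\cup L\cup\Gamma$ and simplify. On $K$, condition \eqref{a1} gives $V=0$, removing all contributions. On $L$, the mean curvature $\mathcal{H}$ vanishes because $L$ lies in the flat hyperplane $\{x_1=0\}$, which explains why $p_2\mathcal{H}$ survives only on $\Gamma$. Collecting the Hadamard term $(u_1-\ub)^2 V\cdot n$, the $\nabla u_1\cdot\nabla p_1$ contribution, and the Robin-shape-derivative contribution for the pair $(\ub,p_2)$, and repeatedly invoking $\dn\ub+\psi_\e\ub=-1$, $\dn p_2+\psi_\e p_2=0$, together with $u_1=0$ on $L$, one arrives at the stated form: $(\nabla p_1\cdot\nabla u_1-\nabla p_2\cdot\nabla\ub)V\cdot n$ on $L$ and $(\nabla p_1\cdot\nabla u_1+\nabla p_2\cdot\nabla\ub+p_2\mathcal{H}+(u_1-\ub)^2)V\cdot n$ on $\Gamma$.

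The main technical obstacle is the careful treatment of the Robin portion of the boundary: one must apply the correct Robin shape-derivative formula (which requires the tangential decomposition $\nabla u=\nabla_\tau u+\dn u\,n$ and the surface divergence theorem) and then verify that on the flat segment $L$ the Hadamard contribution $\ub^2\,V\cdot n$ together with the curvature-free Robin surface term collapse, with the correct sign, into $-\nabla p_2\cdot\nabla\ub\,V\cdot n$. Once this cancellation is made explicit, assembling the three pieces yields the claimed formula; everything else reduces to bookkeeping in Green's identity.
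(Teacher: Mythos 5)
Your overall strategy (Hadamard transport identity, local shape derivatives $u_1'$, $\ub'$, adjoint states, Green's formula plus tangential calculus) is the same as the paper's. However, there is a concrete mismatch that prevents your argument from proving the theorem \emph{as stated}: the adjoint $p_2$ you construct is not the one the theorem refers to. Equations \eqref{10.1}--\eqref{10.4} define $p_2$ by $-\Delta p_2=+2(u_1-\ub)$ in $\Om$ (same sign of the source as $p_1$), with homogeneous \emph{Dirichlet} data on $K\cup L$ and homogeneous \emph{Neumann} data on $\Gamma$ --- not a sign-flipped source with the adjoint Robin condition $\dn p_2+\psi_\e p_2=0$ on all of $\partial\Om\setminus K$. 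Since the formula in the theorem is expressed in terms of that specific $p_2$, a derivation carried out with a different adjoint yields a different (even if internally consistent) identity. The paper's choice is not cosmetic: the cancellations it relies on are exactly that $\dn p_2=0$ on $\Gamma$ turns $\nabla_\Gamma p_2\cdot\nabla_\Gamma\ub$ into $\nabla p_2\cdot\nabla\ub$ after the tangential integration by parts of $p_2\,\mathrm{div}_\Gamma(V\cdot n\,\nabla_\Gamma\ub)$ (this is where $p_2\mathcal{H}$ on $\Gamma$ comes from), and that $p_2=0$ on $L$ makes $\nabla p_2$ purely normal there, so $\dn p_2\,\dn\ub=\nabla p_2\cdot\nabla\ub$ on $L$. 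Neither simplification is available with a Robin adjoint, and your claimed "collapse" on $L$ into $-\nabla p_2\cdot\nabla\ub\,V\cdot n$ is asserted rather than verified.

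A second, related divergence: you take $\ub'$ to satisfy a perturbed Robin condition on the whole of $\partial\Om\setminus K$, whereas the paper uses the Dirichlet-type transmission condition $\ub'=-\dn\ub\,V\cdot n$ on $L$ (equation \eqref{8.3}) and reserves the Robin shape-derivative formula, with the $\mathrm{div}_\Gamma$ and curvature terms, for $\Gamma$ only (equations \eqref{8.4}--\eqref{8.5}). This is what produces the boundary contribution $\int_L \dn p_2\,\dn\ub\,V\cdot n$ and, ultimately, the term $-\nabla p_2\cdot\nabla\ub\,V\cdot n$ on $L$ in the final formula. Your observation that $\mathcal{H}=0$ on the flat part $L$ is correct but is not the mechanism the paper uses there. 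To repair the proposal you would need to either (i) adopt the paper's $p_2$ and $\ub'$ systems and redo the Green/tangential-calculus bookkeeping, or (ii) keep your Robin adjoint but then accept that the resulting formula is written in terms of a different adjoint state and reconcile it with the statement --- which is precisely the step that is missing.
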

\begin{proof}
According to \cite{MR1855817,MR2512810,MR1215733}, the shape derivative of $J_\e$ is given by
\begin{equation}
\label{7.0} dJ_\e(\Om;V)=\int_{\Om}2(u_1-u_2)(u'_1-\ub')+\int_{\partial\Om}(u_1-\ub)^2V\cdot n,
\end{equation}
where $u'_1$ and $\ub'$ are the so-called {\it shape derivatives} of $u_1$ and $u_2$, respectively, and solve
\begin{align}
 \label{7.1}-\Delta u'_1 & =  0 \quad  \mbox{in} \  \Om , \\
 \label{7.2}u'_1 & =  0 \quad  \mbox{on}\  K ,\\
 \label{7.3}u'_1 & =  -\dn u_1V\cdot n \quad  \mbox{on}\  \partial\Om\setminus K ,
\end{align}
\begin{align}
 \label{8.1}-\Delta \ub' & =  0 \quad  \mbox{in} \  \Om , \\
 \label{8.2}\ub' & =  0 \quad  \mbox{on}\  K ,\\
 \label{8.3}\ub' & =  -\dn \ub V\cdot n  \quad  \mbox{on}\  L ,\\
\notag \dn \ub'+\psi_\e\ub' & =\mbox{div}_\Gamma (V\cdot n \nabla_\Gamma \ub)\\
 \label{8.4}&\quad -\mathcal{H}V\cdot n- \psi_\e\dn\ub V\cdot n\quad  \mbox{on} \  \Gamma,
\end{align}
where $\mathcal{H}$ denotes the mean curvature of $\Gamma$, and $\nabla_\Gamma$ is the tangential gradient on $\Gamma$ defined by
$$\nabla_\Gamma u=\nabla u -(\dn u )n. $$
Note that $u_1'$ and $\ub'$ both vanish on $K$, indeed, $K$ is fixed due to \eqref{a1} which follows from the  definition of our problem and of the class $\OO$. Further we will also need 
\begin{equation}
\label{8.5}\dn \ub'=\mbox{div}_\Gamma (V\cdot n \nabla_\Gamma \ub)-\mathcal{H}V\cdot n\quad  \mbox{on} \  \Gamma ,
\end{equation}
which is obtained in the same way as \eqref{8.4}. We introduce the adjoint states $p_1$ and $p_2$
\begin{align}
 \label{9.1}-\Delta p_1 & =  2(u_1-\ub)  \quad  \mbox{in} \  \Om , \\
 \label{9.2}p_1 & =  0 \quad  \mbox{on}\  \partial\Om ,
\end{align}
\begin{align}
 \label{10.1}-\Delta p_2 & =  2(u_1-\ub) \quad  \mbox{in} \  \Om , \\
 \label{10.2}p_2 & =  0 \quad  \mbox{on}\  L\cup K ,\\
 \label{10.4}\dn p_2 & =0 \quad  \mbox{on} \  \Gamma.
\end{align}
Note that $p_1$ and $p_2$ actually depend on $\e$ although this is not apparent in the notation for the sake of readability. Using the adjoint states, we are able to compute
\begin{eqnarray*}
\int_{\Om}2(u_1-\ub)u'_1 & = & \int_{\Om}-\Delta p_1u'_1\\
&= & \int_{\Om}-\Delta u'_1p_1-\int_{\partial\Om}\dn p_1u'_1-p_1\dn u_1\\
&=& -\int_{\partial\Om\setminus K}\dn p_1u'_1\\
&=&  \int_{\partial\Om\setminus K}\dn p_1\dn u_1 V\cdot n .
\end{eqnarray*}
Observing that $\nabla p_1=\dn p_1 n$ and $\nabla u_1=\dn u_1 n$ on $\partial\Om\setminus K$  due to \eqref{2.3} and \eqref{9.2} we obtain
\begin{equation}
\int_{\Om}2(u_1-\ub)u'_1dx =  \int_{\partial\Om\setminus K}\nabla p_1\cdot\nabla u_1 V\cdot n.
\end{equation}
For the other domain integral in \eqref{7.0} we get
\begin{eqnarray*}
\int_{\Om}2(u_1-\ub)\ub' & = & \int_{\Om}-\Delta p_2\ub'\\
&= & \int_{\Om}-\Delta \ub'p_2-\int_{\partial\Om}(\dn p_2\ub'-p_2\dn \ub') .\\
\end{eqnarray*}
At this point we make use of \eqref{8.1}-\eqref{8.5} and we get
\begin{eqnarray*}
\int_{\Om}2(u_1-\ub)\ub' &=& \int_{\Gamma} p_2(\mbox{div}_\Gamma (V\cdot n \nabla_\Gamma \ub)-\mathcal{H}V\cdot n)d\Gamma+ \int_{L}\dn p_2\dn \ub V\cdot n~dL .
\end{eqnarray*}
Applying classical tangential calculus to the above equation (see \cite[Proposition 2.57]{MR1215733} for instance) we have
\begin{eqnarray*}
\int_{\Om}2(u_1-\ub)\ub' &=&  -\int_{\Gamma}(\nabla_\Gamma p_2 \cdot\nabla_\Gamma \ub V\cdot n -p_2\mathcal{H}V\cdot n)d\Gamma+ \int_{L}\dn p_2\dn \ub V\cdot n ~dL\\
&=&  -\int_{\Gamma}(\nabla p_2 \cdot\nabla \ub V\cdot n -p_2\mathcal{H}V\cdot n)d\Gamma+ \int_{L}\nabla p_2\cdot\nabla \ub V\cdot n ~dL,\\
\end{eqnarray*}
and the proof is complete.
\end{proof}

\section{Numerical scheme}\label{numScheme}

\subsection{Parameterization versus level set method}

For the numerical realization of shape optimization problems, the main issue is the representation of the moving shape $\Om$. Several different techniques are available: for our purpose, the most appropriate methods would be {\it parameterization} and the {\it level set method}. In the parameterization method for two-dimensional problems, curves are typically represented as splines given by control points $\p_k=(\p_{1,k},\p_{2,k})$, $k=0,..,m$ with $m\in\mathds{N}^*$. The coordinates of these control points then become the shape design variables. In the level set method, the boundary of the domain in $\mathds{R}^N$ is implicitely given by the zero level set of a function in $\mathds{R}^{N+1}$. Parameterization methods are the easiest to implement if the topology of the domain $\Om$ does not change in the course of iterations, whereas the level set method is more technical to implement but thanks to the implicit representation, it allows to handle easily topological changes of the domain, such as the creation of holes or the merging of two connected components.\\

For instance, in \cite{MR2149216,MR2306261}, the level set method is used to solve Bernoulli free boundary problem where the number of connected components is not known beforehand. In our case, we are solving the free boundary problem $(\mathcal{F})$ in the class $\OO$ of convex domains, thus the domains only have one connected component and the topology is known. In this case it is better to opt for the parameterization method which is easier to implement and lighter in terms of computations.\\ 

The free boundary $\Gamma\subsetneq\partial\Om$ is represented with the help of a Bezier curve of degree $m\in\mathds{N}^*$. Let 
$$x(s)=(x_1(s),x_2(s)),\quad s\in [0,1]$$
be a parametric representation of the open curve  $\Gamma$ and let 
$$\p_k=(\p_{1,k},\p_{2,k}),\quad k=0,..,m$$
be a set of $m+1$ control points such that the parameterization of $\Gamma$ satisfies
\begin{equation}
\label{ns1} x(s)=(x_1(s),x_2(s))=\sum_{k=0}^m B_{k,m}(s) \p_k,
\end{equation}
where
\begin{equation}
\label{ns2} B_{k,m}(s)= \binom {m} {k} s^k(1-s)^{m-k},
\end{equation}
and $ \binom {m} {k}$ are the binomial coefficients. The geometric features such as the unit tangent $\tau(s)$, unit normal $n(s)$ and curvature $\mathcal{H}(s)$ are easily obtained from the representation \eqref{ns1}. Indeed we have
\begin{equation}
\label{ns3} \tau(s) = x'(s)/|x'(s)|,
\end{equation}
with 
\begin{equation}
\label{ns4} x'(s)=\sum_{k=0}^m B_{k,m}'(s) \p_k.
\end{equation}
The coefficients $B_{k,m}'(s)$ are derived from \eqref{ns2}
\begin{equation}
\label{ns5} B_{k,m}'(s) =  \binom {m} {k} \left[ks^{k-1}(1-s)^{m-k}\mathds{1}_{\{k\geq 1\}}+ (k-m)s^k(1-s)^{m-k-1}\mathds{1}_{\{k\leq m-1\}} \right] .
\end{equation}
Since $n(s)\cdot\tau(s)=0$, we deduce the expression for the unit normal $n(s)$
\begin{equation}
\label{ns6} n(s)=\frac{\sum_{k=0}^m B_{k,m}'(s) \p_k^\bot}{ \left|\sum_{k=0}^m B_{k,m}'(s) \p_k^\bot\right|},
\end{equation}
with $\p_k^\bot:=(\p_{2,k},-\p_{1,k})$. The curvature $\mathcal{H}(s)$ is obtained with the help of formula
\begin{equation}
\label{ns7} \tau'(s)=\mathcal{H}(s) n(s).
\end{equation}
Thus we take
\begin{equation}
\label{ns8} \mathcal{H}(s)=\tau'(s)\cdot n(s).
\end{equation}
\begin{remark}\label{ns8b}
According to \eqref{ns3}, \eqref{ns4} and \eqref{ns5}, we obtain
\begin{equation}
\tau(0) =\frac{\p_1-\p_0}{|\p_1-\p_0|},\quad\tau(1) =\frac{\p_m-\p_{m-1}}{|\p_m-\p_{m-1}|}.
\end{equation}
Thus, in order to create a curve which is tangent to the axis $\{x_1=0\}$, we need to take $\p_0,\p_1$ and $\p_{m-1},\p_m$ on $\{x_1=0\}$.
\end{remark}
\subsection{Algorithm}

For the numerical algorithm we use a gradient projection method in order to deal with the geometric constraint $\Om\subset\mathds{R}^N_+$; see the textbooks \cite{MR1678201,MR2244940} for details on the method. A solution for dealing with the shape optimization problems with a convexity constraint is to parameterize the boundary using a support function $w$. If one uses a polar coordinates representation $(r,\theta)$ for the domains, namely
$$
\Omega_w:=\left\{(r,\theta)\in [0,\infty)\times \mathds{R}; r< \frac{1}{w(\theta)} \right\},
$$
where $w$ is a positive and $2\pi$-periodic function, then $\Omega_w$ is convex if and only if $w'' + w\geq 0$; see \cite{Lamboley} for details. However, in our case, the convexity constraint for $\Om$ is not implemented (i.e. we relax this constraint) for the sake of simplicity, but the convexity property is observed at every iteration and in particular for the optimal domain if the initial domain is convex. Moreover, Theorem 6.6.2 of \cite{MR2512810} may be easily generalized in our case and guarantees the convexity of the solution of the free boundary problem $(\mathcal{F})$ even if the convexity hypothesis were not contained in the set $\mathcal{O}$. 
\par We will denote by a superscript $(l)$ an object at iteration $l$. The algorithm is as follows: we are looking for an update of the design variable $\p_k$ of the type
\begin{equation}
\label{ns9} \p_k^{(l+1)}=P(\p_k^{(l)}+\alpha d\p_k^{(l)}),
\end{equation}
where $P$ stands for the projection on the set of constraints and $\alpha$ is the steplength which has to be determined by an appropriate linesearch. In our case, the constraint is $\Om\subset\mathds{R}^N_+$, which implies the constraint
\begin{equation}
\label{ns9b} x_1(s)\geq 0,\quad\forall s\in [0,1].
\end{equation}
In view of \eqref{ns1}, it is difficult to directly interpret the constraint \eqref{ns9b} for individual control points $\p_k$. We choose therefore to impose the stronger constraint
\begin{equation}
\label{ns9c} \p_{1,k}\geq 0,\quad\forall k\in\{1,..,m\}.
\end{equation}
for the control points. Constraint \eqref{ns9c} is stronger than \eqref{ns9b}, indeed, on one hand there might exist a $\p_{k}$ such that $\p_{1,k}<0$ while \eqref{ns9b} is still  satisfied, but on the other hand, condition \eqref{ns9c} implies \eqref{ns9b}. However, in our case, the tips $x(0)$ and $x(1)$ of $\Gamma$ are moving and the constraint should not be active for the points of $\Gamma$ on the optimal domain. With \eqref{ns9c} we only guarantee that the domain stays feasible, i.e. $\Om\in\mathds{R}^N_+$ for all iterates. In view of Remark \ref{ns8b}, we also impose  
$$\p_{2,0}=\p_{2,1}=\p_{2,m-1}=\p_{2,m}=0$$
in order to preserve the tangent to the axis $\{x_1=0\}$ at the tips of $\Gamma$. Therefore, for $k=0,..,m$,  $\p_k^{(l)}$ is updated using,
\begin{align}
\label{ns10} \p_{1,k}^{(l+1)} &=\max\left(\p_{1,k}^{(l)}+\alpha d\p_{1,k}^{(l)},0\right),\\
\label{ns11} \p_{2,k}^{(l+1)} &=\p_{2,k}^{(l)}+\alpha d\p_{2,k}^{(l)},\\
\label{ns11b} d\p_{2,0}^{(l)}=d\p_{2,1}^{(l)} &=0,\\
\label{ns11c} d\p_{2,m-1}^{(l)}=d\p_{2,m}^{(l)} &=0.
\end{align}
The link between the perturbation field $V$ and the step $d\p_k$ is directly established using \eqref{ns1}, and we obtain 
\begin{equation}
\label{ns12} V(x(s))=\sum_{k=0}^m B_{k,m}(s) d\p_k.
\end{equation}
Thus, with a shape derivative given by 
\begin{equation}
\label{ns13} dJ_\e(\Om;V)=\int_{\partial\Om} \nabla J_\e(x) V(x)\cdot n(x)\, d\Gamma(x)
\end{equation}
as in Theorem \ref{thm_shapeder}, we obtain using \eqref{ns12} and \eqref{ns13}
\begin{align*}
dJ_\e(\Om;V) &= \int_{0}^1  \nabla J_\e(x(s)) V(x(s))\cdot n(s)|x'(s)|\, ds\\
&= \int_{0}^1  \nabla J_\e(x(s)) \left[\sum_{k=0}^m B_{k,m}(s) d\p_k\right]   \cdot n(s)|x'(s)|\, ds\\
&= \sum_{k=0}^m d\p_k \cdot\int_{0}^1  \nabla J_\e(x(s))  B_{k,m}(s)  n(s)|x'(s)|\, ds.
\end{align*}
Thus, a descent direction for the algorithm is given by
\begin{equation}
\label{ns14}  d\p_k=-\int_{0}^1  \nabla J_\e(x(s))  B_{k,m}(s)  n(s)|x'(s)|\, ds,
\end{equation}
and the update is then performed according to \eqref{ns10}-\eqref{ns11c}. The step $\alpha$ is determined by a line search in the spirit of the gradient projection algorithm \cite{MR1678201}: a step is validated if we observe a sufficient decrease of the shape functional $J_\e$ measured by 
$$J_\e(\Om^{(l+1)})-J_\e(\Om^{(l)})\leq -\frac{\alpha}{\lambda} \sum_{k=1}^m\vert \p_{k}^{(l+1)}- \p_{k}^{(l)} \vert^2,$$
where $\vert\cdot \vert$ denotes the Euclidian distance. The line search consists in finding the smallest integer $a$ (the smallest possible being $a=0$) such that 
$$\alpha=\mu\eta^a,$$
where $\mu$ and $\eta<1$ are user-defined parameters. To stop the algorithm, we use the following stopping criterion: we stop when
$$\vert \p_{k}^{(l+1)}- \p_{k}^{(l)} \vert \leq \tau_r\vert \p_{k}^{(1)}- \p_{k}^{(0)} \vert,$$
where $\tau_r$ is a user-defined parameter.

\section{Numerical results}\label{numres}

For the numerical resolution we take $m=40$ control points $\p_k$. We discretize the interval $[0,1]$ for the parameterization $x(s)$ using $400$ points. The domain $K$ is chosen as
$$K=\{0\}\times [0.5-\kappa_1,0.5+\kappa_1],$$
with 
$\kappa_1\approx 0.129$. The initial domain $L$ is chosen as
$$L=\{0\}\times [0.5-\kappa_2,0.5-\kappa_1]\cup [0.5+\kappa_1,0.5+\kappa_2],$$
with 
$\kappa_2\approx 0.233$. We use the Matlab PDE toolbox to produce a grid in $\Om$ and solve $u_1, \ub,p_1,p_2$ using finite elements. The geometric quantities such as tangent, normal and curvature are computed using \eqref{ns3}-\eqref{ns4}, \eqref{ns6} and \eqref{ns8}, respectively. We initialize the points $\p_k$ by placing them evenly on a half-circle of center $\{0\}\times \{0.5\}$ and radius $0.3$, except for the two first $\p_0,\p_1$ and two last points $\p_{m-1},\p_m$ which have to lay on the axis $\{x_1=0\}$ as mentionned earlier. We choose $\mu=10$, $\eta=0.5$ for the line search and $\tau_r=5\times 10^{-4}$ for the stopping criterion. For the penalization we use \eqref{6.15} and choose $\e=10^{-1}$ and $q=4$.\\

The algorithm terminated after $220$ iterations. The results are given in Figures \ref{fig_state} to \ref{fig_omega}.  In Figure  \ref{fig_state}, the two states $u_1$ and $\ub$ as well as the two adjoint states $p_1$ and $p_2$ are plotted. The difference between $u_1$ and $\ub$ in the final domain $\Om_{final}$ is plotted in Figure \ref{fig_residual}, along with the residual $J_\e(\Om)$ given by \eqref{6.18}. In Figure \ref{fig_omega}, the initial and final boundaries are plotted in blue and red, respectively, while the set of control points of the curve $\Gamma$ is plotted in green.  We observe  that the optimal domain is symmetric as expected from section \ref{symm}. The optimal set $L_{final}$ is given by
$$L_{final}=\{0\}\times [0.5-\kappa_{fi},0.5-\kappa_1]\cup [0.5+\kappa_1,0.5+\kappa_{fi}].$$
with
$\kappa_{fi}\approx 0.2342$. The value of $J_\e$ on the initial domain is
$$J_\e(\Om_{initial})\approx 2.6\times 10^{-3}, $$
and the value of $J_\e$ on the final domain is
$$J_\e(\Om_{final})\approx 3.3\times 10^{-8},$$
as may be seen in Figure \ref{fig_residual}. Therefore, the shape functional $J_\e$ has been significantely decreased and is close to its global optimum. \ \\

\vspace{1cm}
\textbf{Acknowledgments.}
The authors would like to express a great deal of gratitude to Professor Michel Pierre for several light brighting discussions.
The authors further acknowledge financial support by the Austrian Ministry of Science and Education and the Austrian Science Fundation FWF under START-grant Y305 ``Interfaces and free boundaries''. The second author were partially supported by the ANR project GAOS ``Geometric analysis of optimal shapes''.

\begin{figure}
\begin{center}
\includegraphics[width=7.5cm]{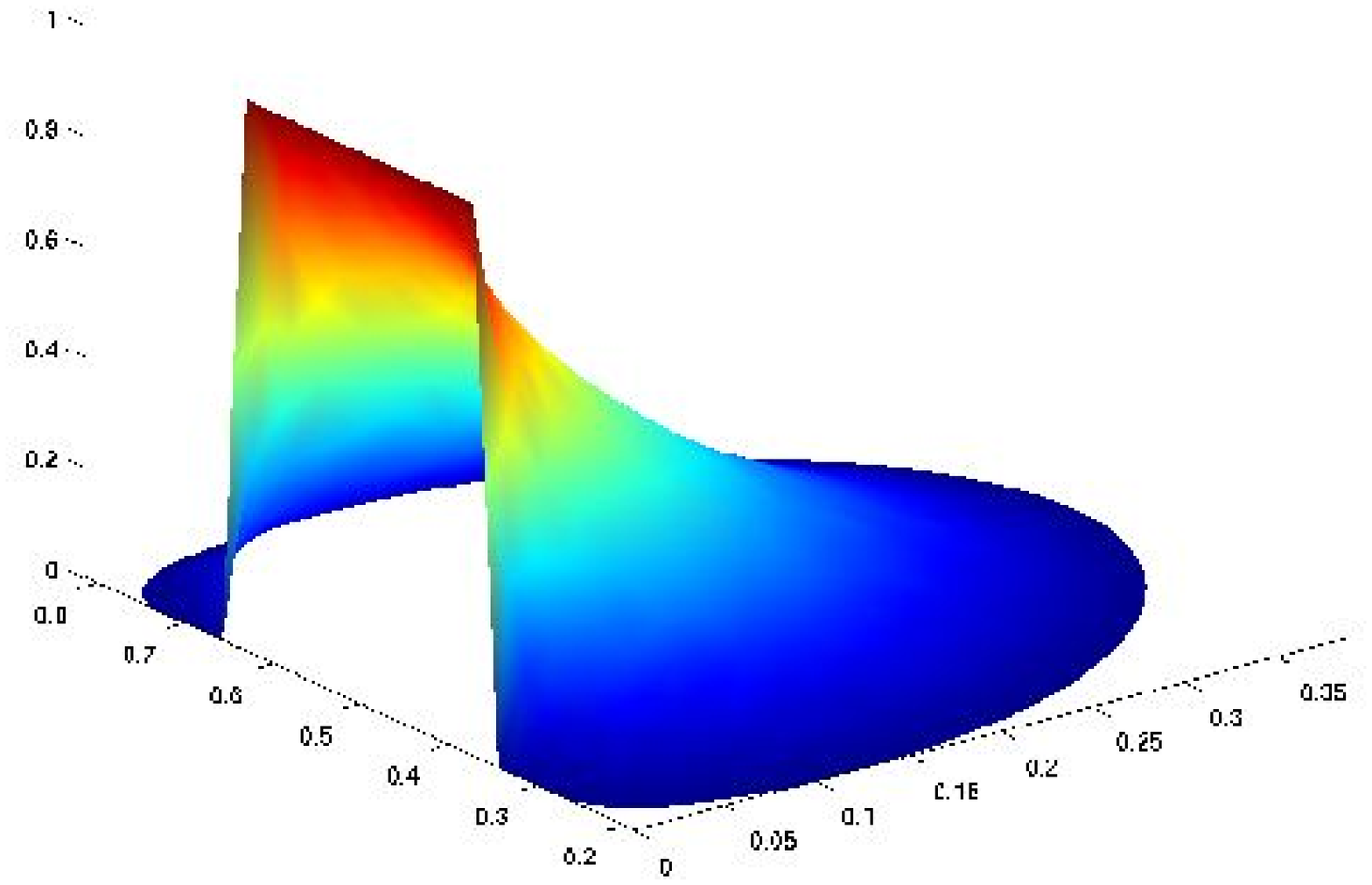}
\includegraphics[width=7.5cm]{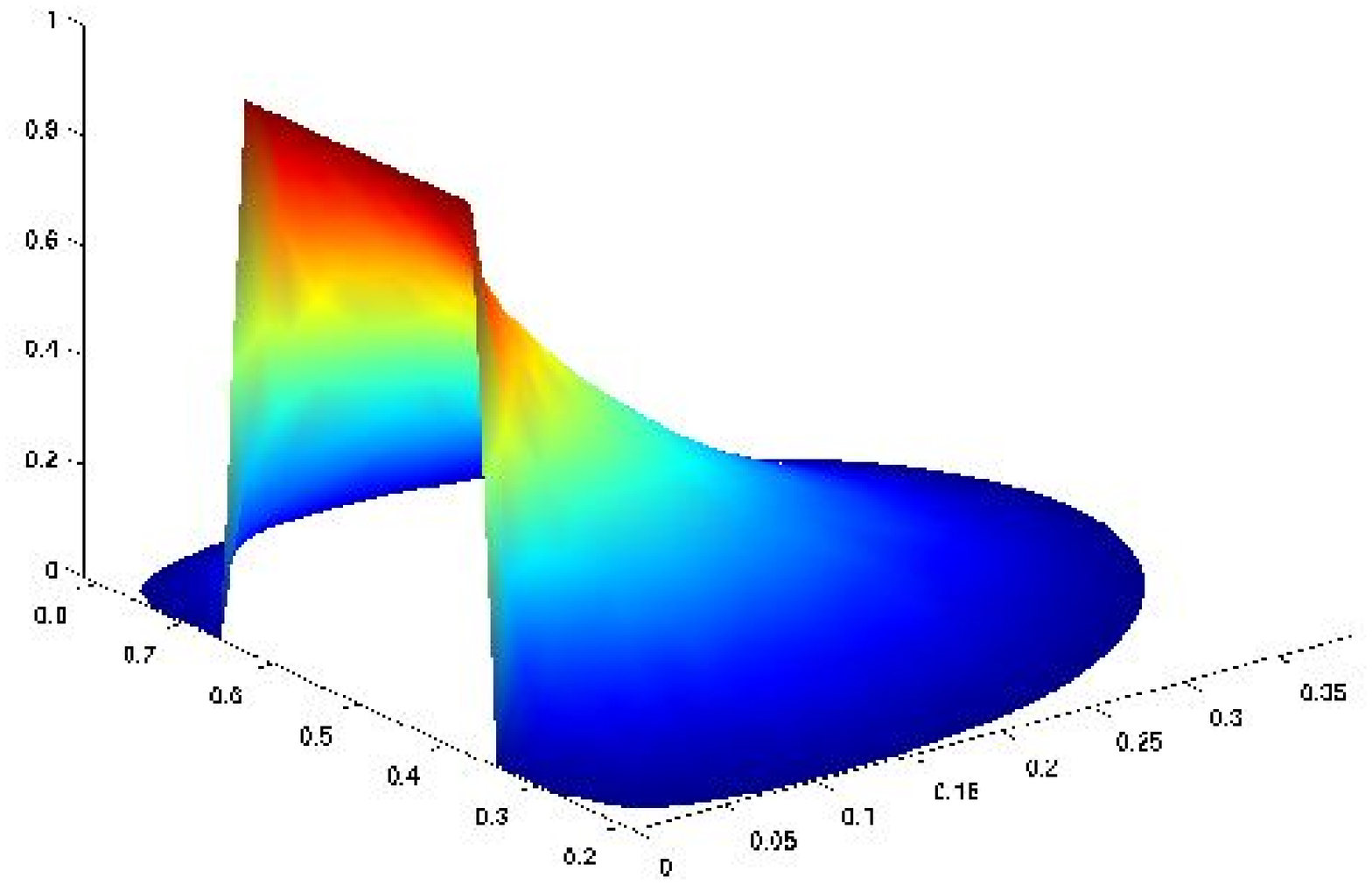}\\
\includegraphics[width=7.5cm]{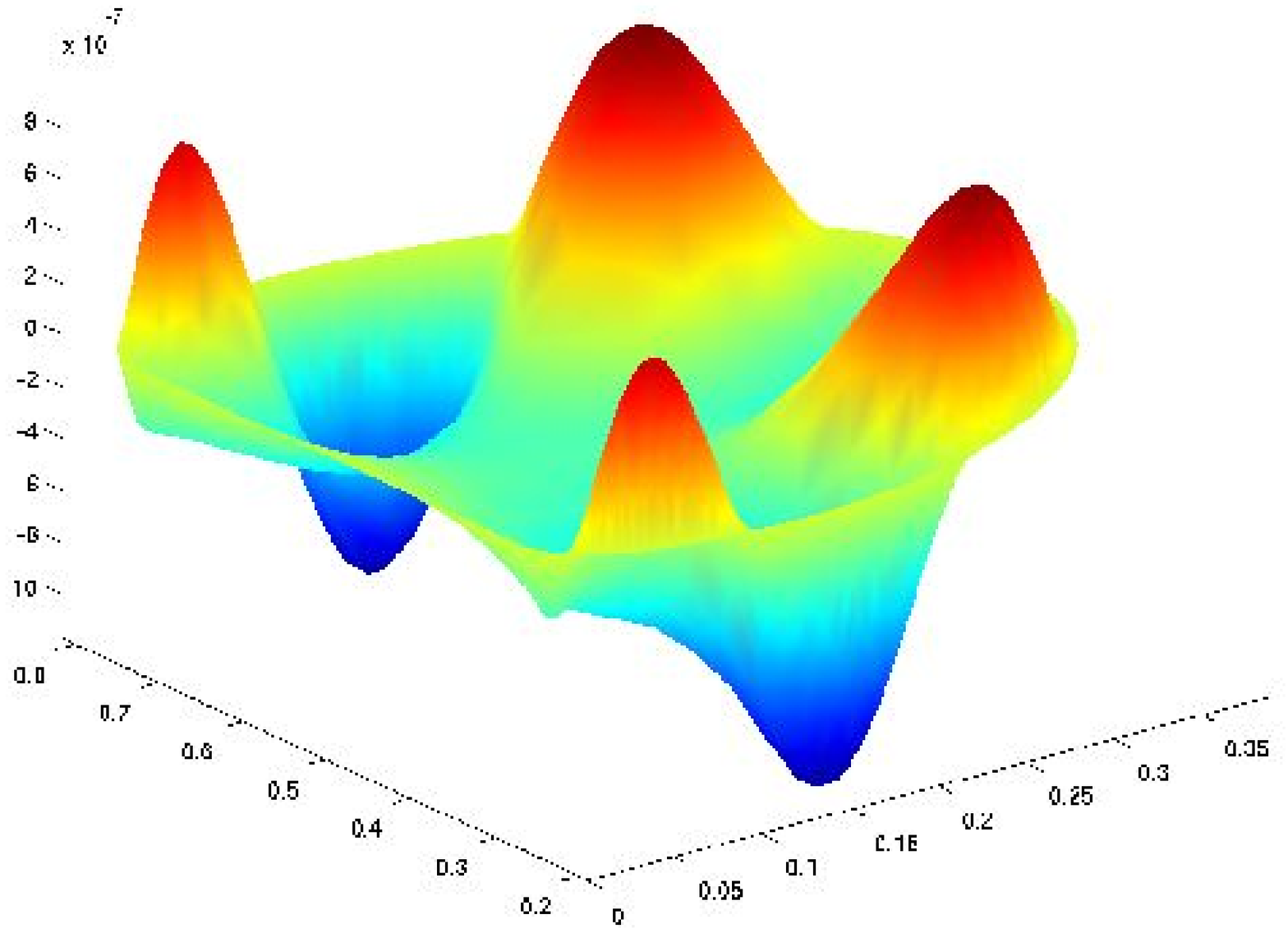}
\includegraphics[width=7.5cm]{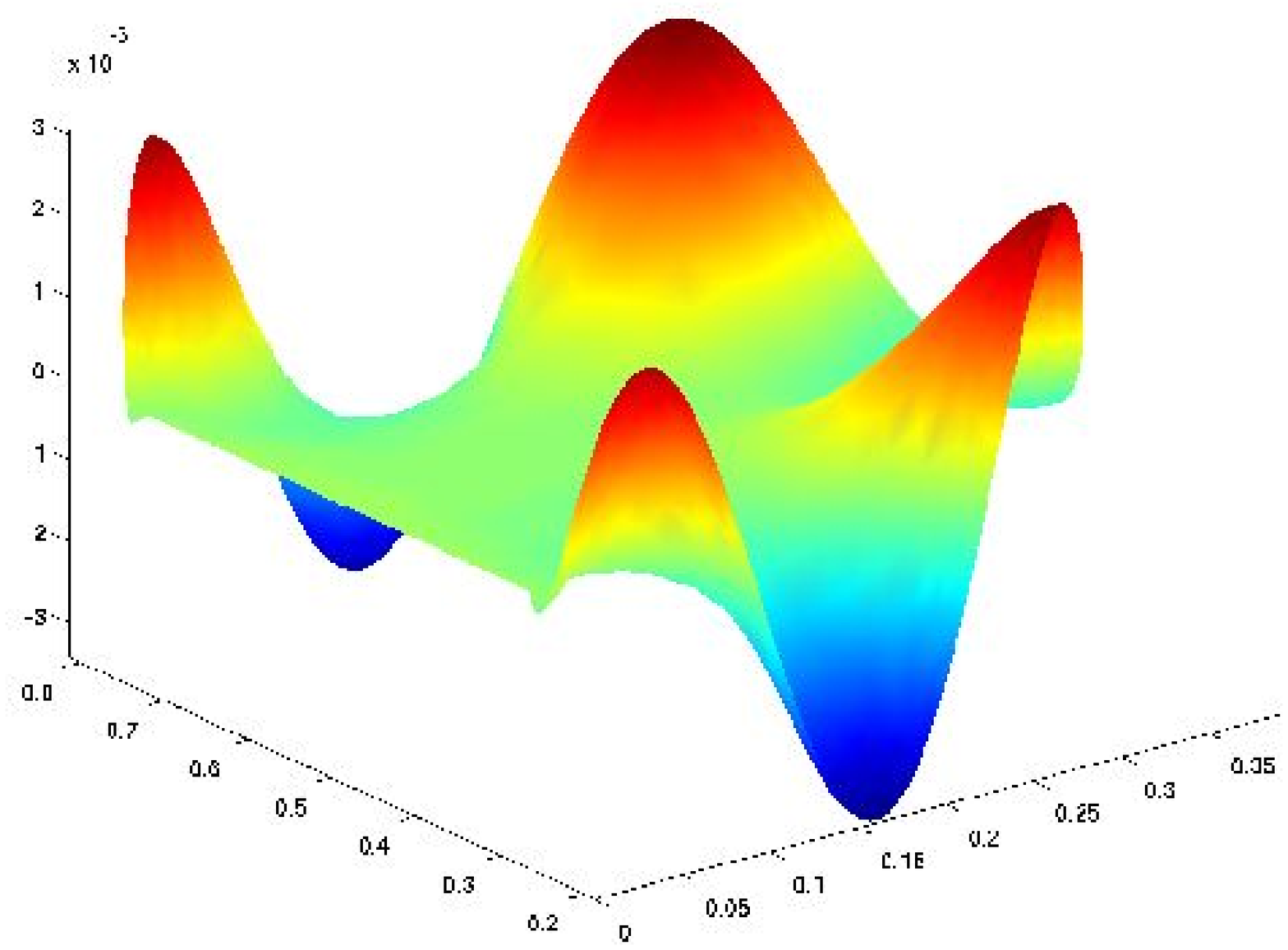}\\
\caption{Solutions $u_1$ (top left), $\ub$ (top right), $p_1$ (bottom left), $p_2$ (bottom right) in the optimal domain.}
\label{fig_state}
\end{center}
\end{figure}

\begin{figure}
\begin{center}
\includegraphics[width=7.5cm]{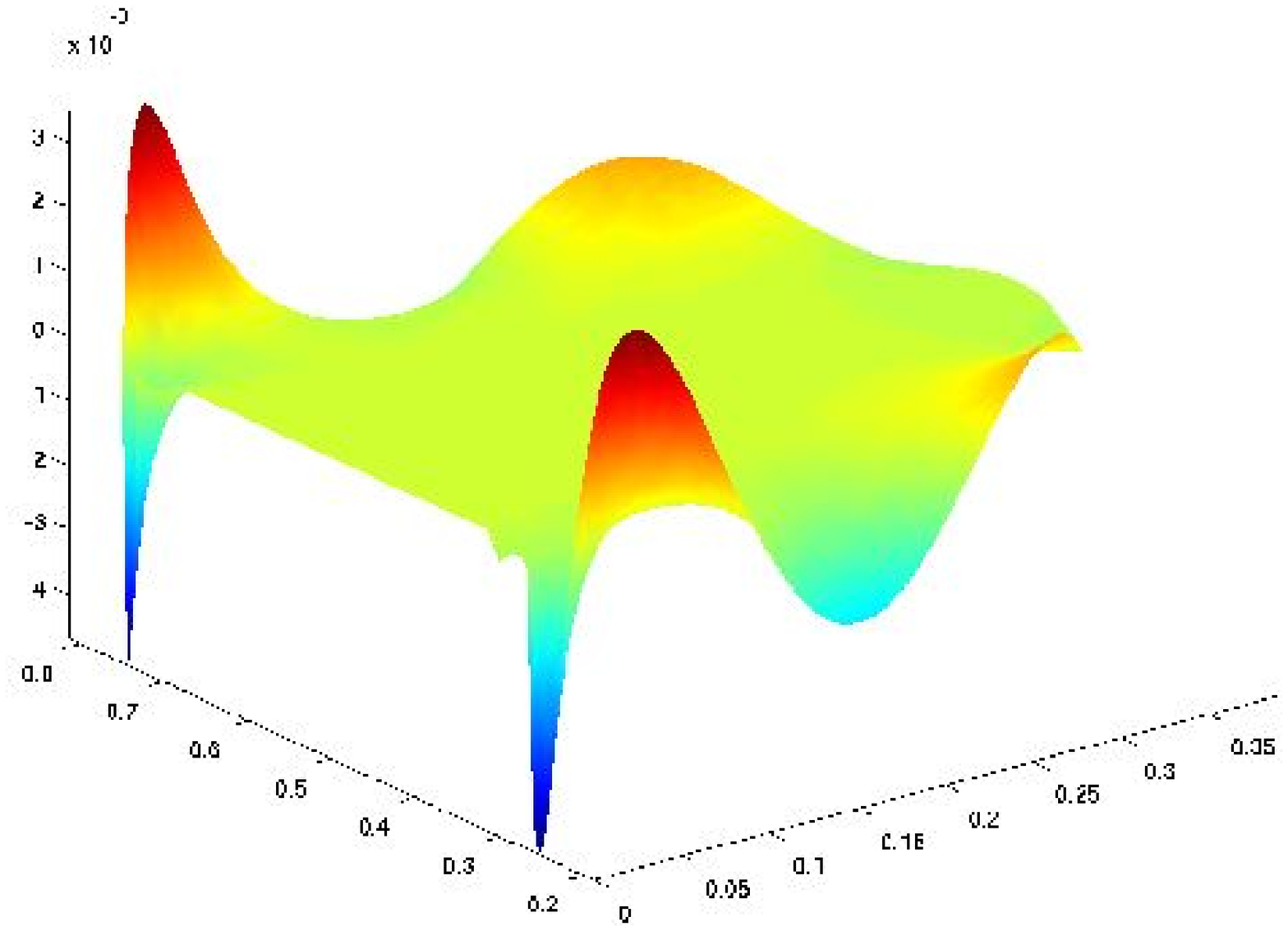}
\includegraphics[width=7.5cm]{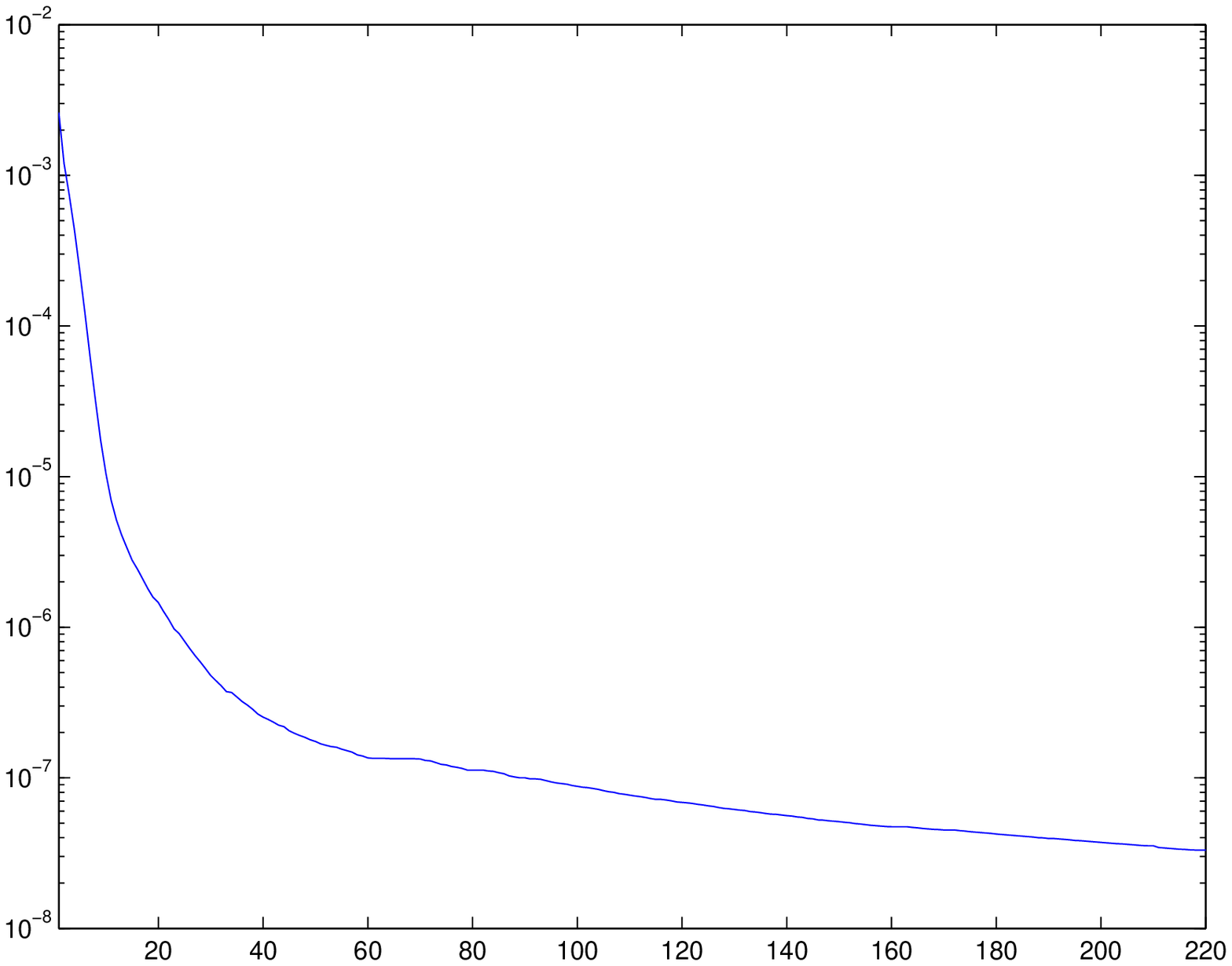}
\caption{Difference $u_1-u_2$ in the optimal domain (left), residual $J_\e$ (right).}
\label{fig_residual}
\end{center}
\end{figure}

\begin{figure}
\begin{center}
\includegraphics[width=7.5cm]{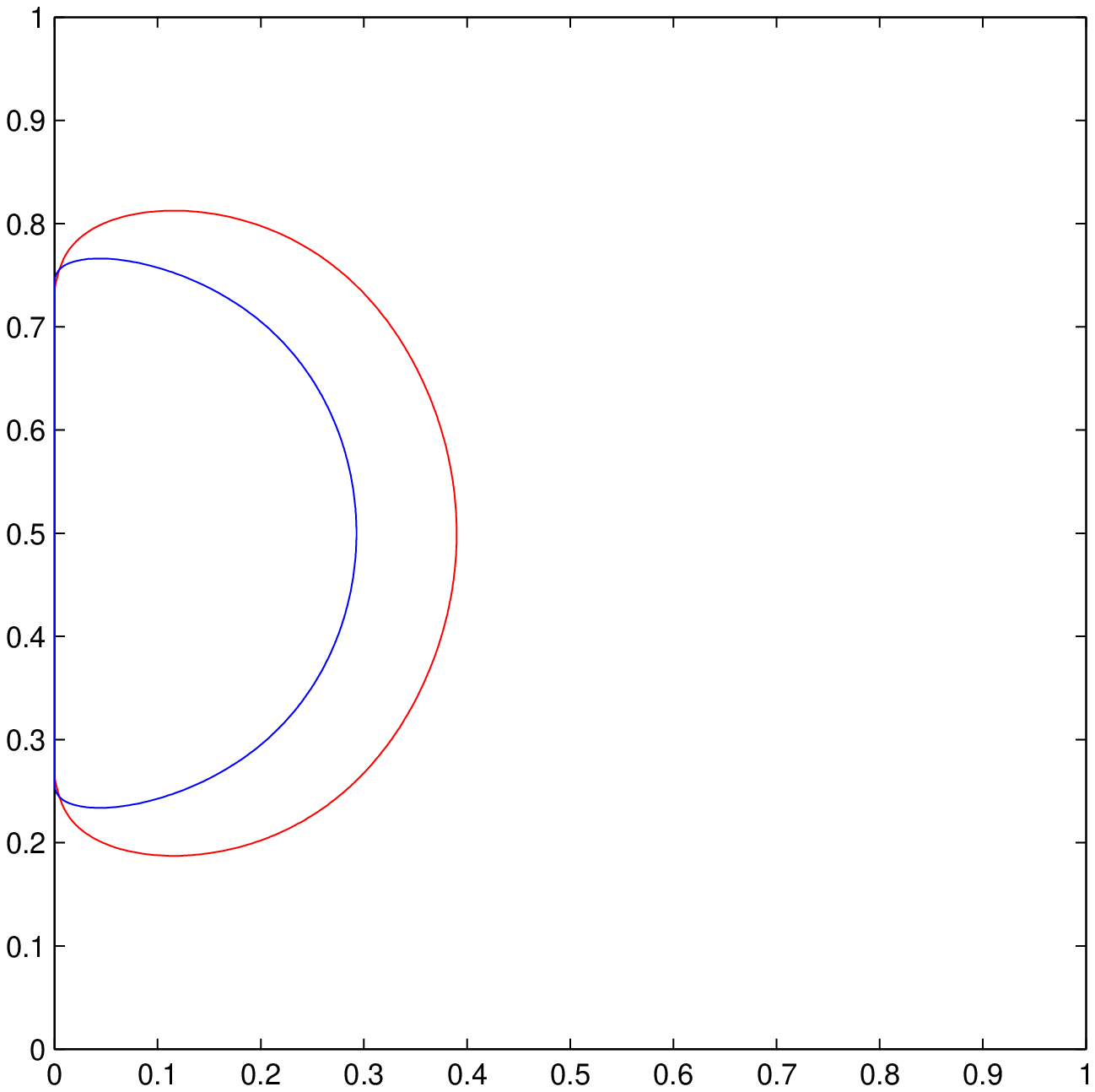}
\includegraphics[width=7.5cm]{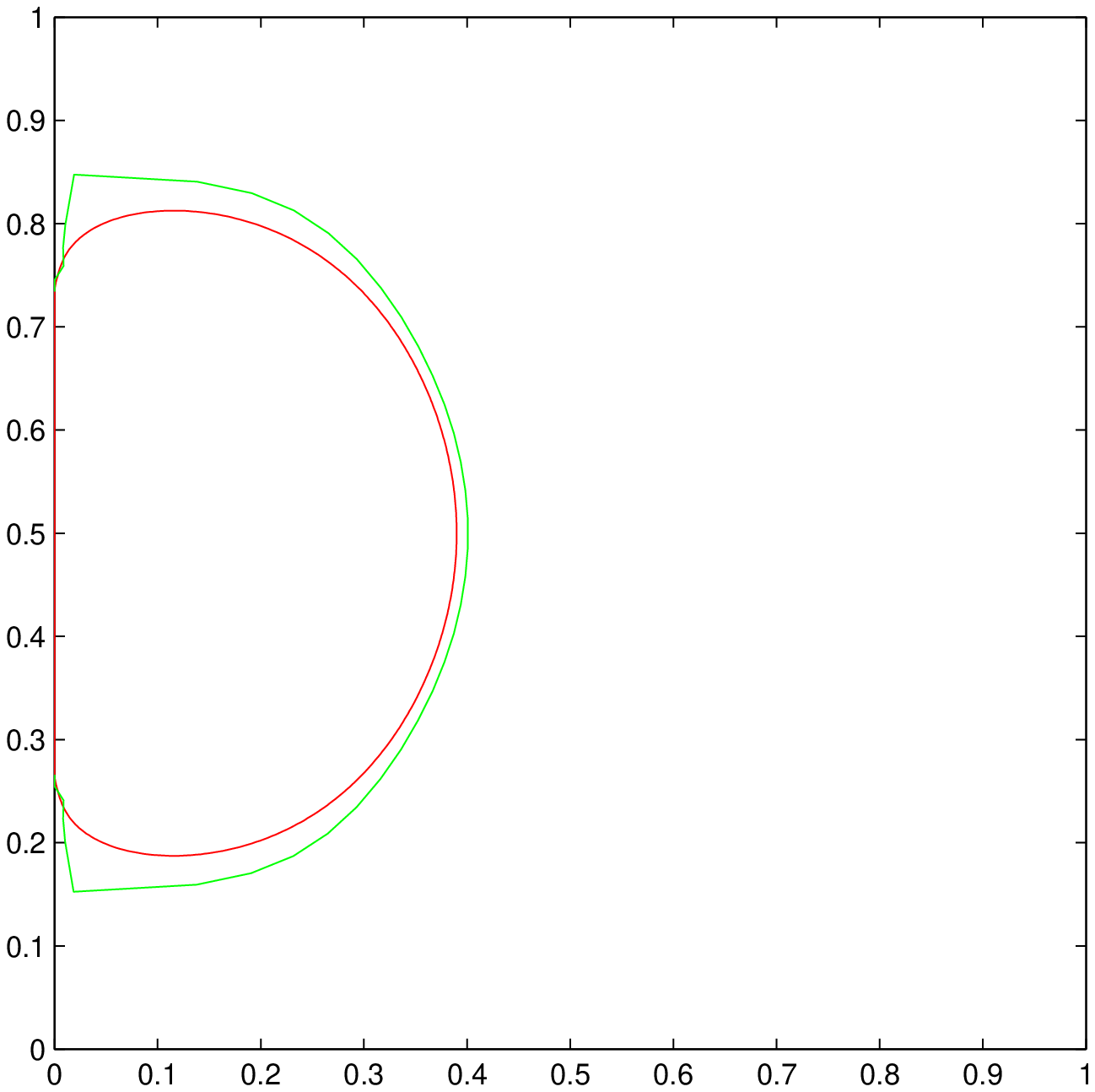}
\caption{Final boundary $\Gamma$ (red), initial boundary $\Gamma$ (blue), control points (green).}
\label{fig_omega}
\end{center}
\end{figure}

\def\cprime{$'$}

\end{document}